\RequirePackage{etex}
\documentclass[10pt, twoside]{article}

\newcommand{\R}{\mathbb{R}}

\newcommand{\eps}{\varepsilon}

\newcommand{\B}[1]{\boldsymbol{#1}}

\newcommand{\mc}{\mathcal}

\newcommand{\type}{\mathrm{\mathbf{type}}}
\newcommand{\lab}{\mathrm{\mathbf{label}}}

\newcommand{\EV}[1]{\mathbb{E}\left[{#1}\right]}
\newcommand{\EVn}[1]{\smash{\mathbb{E}_{\B n, \B {\kappa}_n}}\left[{#1}\right]}
\newcommand{\ld}[1]{\frac{1}{n}\log{#1}}

\newcommand{\Scal}{\mathcal{S}}
\newcommand{\Gcal}{\mathcal{G}}

\newcommand{\symm}{\smash{\mathrm{Sym}_d}}

\newcommand{\Obig}[1]{O\left({#1}\right)}
\newcommand{\floor}[1]{\lfloor{#1}\rfloor}
\newcommand{\irg}[2]{\mathcal{G}(\B{#1},{\B{#2}_n}/n)} 
\newcommand{\IRG}[2]{\mathcal{G}_n \left(\B{#1}, \frac{\B{#2}_n}{n} \right)} 

\newcommand{\e}{\mathrm{e}}
\usepackage[utf8]{inputenc}	
\usepackage{amsmath, amsthm, amsfonts, amssymb, amscd}
\usepackage{graphicx} 
\usepackage[english]{babel}
\usepackage{multirow,booktabs}
\usepackage[table]{xcolor}
\usepackage{fullpage}
\usepackage{tikz}
\usepackage{tikz-network}
\usepackage{pgfplots}
\usepackage{float}
\pgfplotsset{compat = newest}
\usetikzlibrary{arrows.meta}
\usetikzlibrary{bending}
\usetikzlibrary{decorations.markings}
\usetikzlibrary{shapes}
\usepackage{lastpage}
\usepackage{caption}
\usepackage[shortlabels]{enumitem}
\usepackage{comment}
\usepackage{fancyhdr}
\usepackage{mathrsfs}
\usepackage{wrapfig}
\usepackage{dsfont}
\usepackage{esint}
\usepackage{setspace}
\usepackage{commath}
\usepackage{calc}
\usepackage{multicol}
\usepackage{cancel}
\usepackage{systeme}
\usepackage{physics}
\usepackage[retainorgcmds]{IEEEtrantools}
\usepackage[margin=3cm]{geometry}
\usepackage{hyperref}
\hypersetup{
    colorlinks=true,
    linkcolor=blue, 
    urlcolor=red, 
    pdftitle = {How to write a thesis}
}
\usepackage[normalem]{ulem}               
\newcommand\redout{\bgroup\markoverwith
{\textcolor{red}{\rule[0.5ex]{2pt}{0.8pt}}}\ULon}

\usepackage{empheq}
\usepackage{framed}
\usepackage[most]{tcolorbox}
\usepackage{xcolor}
\colorlet{shadecolor}{orange!15}
\numberwithin{equation}{section}

\newtheoremstyle{def}
  {\topsep}{\topsep}%
  {\slshape}{}%
  {\bfseries}{}%
  {\newline}{}%

\newtheoremstyle{thm}
  {\topsep}{\topsep}%
  {\itshape}{}%
  {\bfseries}{}%
  {\newline}{}%

\theoremstyle{thm}
\newtheorem{theorem}{Theorem}[section]
\newtheorem{Lemma}[theorem]{Lemma}
\newtheorem{proposition}[theorem]{Proposition}

\theoremstyle{def}
\newtheorem{definition}[theorem]{Definition}
\newtheorem{remark}[theorem]{Remark}

\begin{document}
\setcounter{section}{0}

\title{Asymptotics of the number of labelled connected sparse multitype graphs}

 \author{Luisa Andreis\footnote{Department of Mathematics ``G. Peano'', Universit\`a degli Studi di Torino, Torino, Italy.\\
 Email: luisa.andreis@unito.it}, Mario Veshaj\footnote{Weierstrass Institute for Applied Analysis and Stochastics, Anton-Wilhelm-Amo-Strasse 39, 10117 Berlin, Germany.\\
 Email: veshaj@wias-berlin.de}}
\date{\today}  

 \maketitle

 \abstract{We study the asymptotic enumeration of labelled connected multitype graphs in the sparse regime, where both the number of vertices and edges grow linearly and the excess is proportional to the size of the graph. Extending the classical theory of connected graph enumeration to the multitype setting, we consider graphs with prescribed numbers of vertices of each type and prescribed edge counts between each pair of types. Our approach is probabilistic and relies on the theory of inhomogeneous random graphs. In particular, we exploit large-deviation principles and asymptotic estimates for connectedness probabilities to relate the counting problem to the emergence of giant components in suitably tuned supercritical random graphs. From large deviation asymptotics of connected components of inhomogeneous random graphs~\cite{AKLP23}, we recognize that a connected graph with a given edge statistics corresponds to the (unique) giant component of  larger inhomogeneous random graph  with a suitably chosen connection kernel. This correspondence allows us to derive the leading exponential asymptotics for the number of connected multitype graphs with fixed type profile and edge matrix. The resulting formula generalizes the asymptotic enumeration results of Bender, Canfield, and McKay for connected sparse graphs,~\cite{BeCaMcKa90}, to the multitype framework. More broadly, the paper illustrates how probabilistic techniques can provide transparent and effective tools for addressing new combinatorial enumeration problems.}

\noindent  \bigskip
\\
{\bf Keywords:} combinatorics, multitype graphs, large deviation theory, probabilistic method, Erd\H{o}s-R\`enyi random graph, connectivity.
\\\\
{\bf AMS Subject Classification 2020:} 	05C30, 05C40, 05C80, 60F10.



\section{Introduction}
The enumeration of connected graphs is a classical topic in combinatorics, dating back to Cayley’s celebrated formula, which states that the number of labelled spanning trees on $n$ vertices equals $n^{n-2}$. A major advance beyond trees was achieved by Bender, Canfield, and McKay~\cite{BeCaMcKa90}, who derived an asymptotic formula for the number of connected graphs with $n$ vertices and $\ell$ edges, in the regime where both $n$ and $\ell$ tend to infinity. In this context, the quantity $\ell - n + 1$ is commonly referred to as the \emph{excess} (or \emph{complexity}) of the graph, as it measures the deviation from the minimal number of edges required for connectivity.

The approach in~\cite{BeCaMcKa90} relies primarily on analytic methods based on differential equations. Subsequently, alternative proofs and refinements have been developed. Pittel and Wormald~\cite{PiWo05} proposed an ``inside-out'' combinatorial strategy, first counting connected graphs with minimum degree at least two (the so-called $2$-core), and then enumerating the ways in which trees can be attached to this core to recover the full class of connected graphs. A different perspective was introduced by van der Hofstad and Spencer~\cite{VdHS06}, who analyzed a breadth-first exploration process of an Erd\H{o}s--R\'enyi random graph with suitably chosen edge probability, and related it to an occupancy problem involving balls placed into tilted bins. This line of reasoning exemplifies the probabilistic method~\cite{AlSp16}, whereby properties of random graphs are leveraged to establish deterministic enumerative results. More recently, Panafieu~\cite{Pan19} improved the asymptotic estimates in~\cite{BeCaMcKa90} in the sparse regime---where the number of edges grows linearly with the number of vertices---using tools from analytic combinatorics.

A natural extension of the classical setting is obtained by considering \emph{multitype} (or \emph{colored}) graphs, in which each vertex is assigned a type. While enumeration problems for multitype trees have been addressed, for instance in~\cite{BeMo14}, the combinatorial literature on multitype connected graphs remains relatively limited. By contrast, the probabilistic study of multitype graphs, often under the name of \emph{inhomogeneous random graphs}, is well developed. Starting from the seminal work of Bollob\'as, Janson, and Riordan~\cite{BJR07}, which established explicit criteria for the phase transition in multitype Erd\H{o}s--R\'enyi graphs (introduced earlier in~\cite{Sod02}), a substantial body of research has explored structural, spectral, and large-deviation properties of these models; see, for instance,~\cite{BhvdHovLe12, DeFr14, ChHaDHoSf21, AKLP23,  BhBuSa24, YuSu24}.

In the present work, we exploit results from the theory of inhomogeneous random graphs---most notably large-deviation estimates---to derive asymptotics for the number of connected multitype graphs with a prescribed number of vertices of each type and of edges between each pair of types. Our approach belongs to the class of probabilistic methods and is conceptually simple: in the sparse regime, any connected multitype graph with given vertex and edge counts can be realized as the giant component of a suitably chosen supercritical inhomogeneous random graph. By carefully tuning the type distribution and the connection matrix, one can enforce the desired combinatorial constraints.

This simple observation turns out to be remarkably powerful. It allows us to extract the leading asymptotic term of the corresponding counting problem in the regime where the excess grows linearly with the number of vertices, yielding a genuinely new enumerative result in the multitype setting. A related use of this larger-graph perspective appears in the recent work of~\cite{BeDovdH26}, where the authors use a similar idea to study the enumeration of connected graphs with a prescribed degree sequence; both works were motivated in part by earlier joint discussions on connected random graphs.

Compared to the single-type case, our analysis is restricted to the sparse regime, and the resulting asymptotics are obtained up to subexponential corrections of order $\mathrm{e}^{o(n)}$, rather than the finer $1+O(1/n)$ precision available in the classical framework. Nevertheless, to the best of our knowledge, this work provides the first extension of the asymptotic formula of~\cite{BeCaMcKa90} to connected multitype graphs, and demonstrates how probabilistic techniques can lead to transparent and effective solutions to previously unexplored combinatorial problems.

\subsection{Our main result}

Let us introduce some notation, necessary to state our result.

\begin{definition} [Multitype graph] \label{def:multi_graph}
    Fix a finite non empty set $\mc{S}$ with $d$ elements. Let $\B n = (n_r)_{r\in\mc{S}}$ be a $d$-tuple of nonnegative integers. A labelled multitype graph of \emph{profile} $\B n$ is a graph on the vertex set
    \begin{equation}\label{eq:vertex_set}
    V_{\B n} : = \{v=(r,i): r\in \mc{S}, i\in [n_r]\}.
    \end{equation}
    The vertex $v=(r,i)$ is said to have type $\type(v)=r\in\mc{S}$ and label $\lab(v)=i\in [n_r]$. The edge set of the graph is the set of unordered pairs of vertices, i.e. it is composed as follows:
    \[
    E_{\B n} := \{{\B e}=\{v,v'\}\colon v,v'\in V_{\B n}  \}.
    \]
    By abuse of terminology and notation, the edge $\mathbf{e}=\{v,v'\}$ is said to have type pair $\type({\B e})\colon =\{\type(v),\type(v')\}$.
\end{definition}
The size of the graph is $n = \smash{\sum_{r\in \mc S} n_r}$. For each multitype graph we can specify its \emph{edge matrix} $\B \eps $, that records the total number of edges between vertices of a given type pair. More precisely, given $r,s\in \mc{S}$,
\[
\eps_{rs} \colon = \#\{ \mathbf{e} \in  E_{\B n} \colon \type(\mathbf{e})=\{r,s\}\}.
\]
The total number of edges is $\eps = \smash{ \sum_{r\le s} \eps_{rs}} $. In the following we will consider only simple undirected multitype graphs, hence edge matrices are always symmetric. Moreover, let us define the matrix $\B J=(J_{rs})$, where $J_{rs}$ is the maximum number of edges for each pair of types allowed by the profile $\B n$, i.e.
\begin{equation}\label{eq:edges_max} 
J_{rs}=
    \begin{dcases}
        \binom{n_r}{2} &\qq{if} r= s,\\
         n_rn_s &\qq{if} r\ne s,
    \end{dcases}
\end{equation}
then we see that the matrix $\B \eps$ is entrywise dominated by the matrix $\B J$. Given two matrices $\B A=(A_{ij})$ and $\B B=(B_{ij})$, we use the notation ${\B A} \preceq {\B B}$ (resp. ${\B A} \prec {\B B}$), if $A_{ij}\leq B_{ij}$ for all $i,j$ (resp. if the inequality is strict for all $i,j$). 

Finally, we are ready to write our quantity of interest
\[
C(\B n, \B \eps) \coloneq \text{\#\{labelled connected multitype graph with profile $\B n$ and edge matrix $\B \eps$\}}.
\]
We are interested in the asymptotics of such quantity for large $n$ in the sparse setting, that is when the number of edges is proportional to the number of vertices. We need nevertheless that the excess of edges (the difference between the number of edges and the minimal number of edges needed to connect the graph, those of a spanning tree) is proportional to $n$ too (in~\cite{VdHS06} this is considered the regime of a \emph{large} excess of edges).  Our result is restricted to the leading terms of the quantity  $ C(\B n, \B \eps)$ and it reads  as follows.
\begin{theorem}\label{thm:main}
    Fix a probability vector $\B \nu\succ \B 0$ on $\mc{S}$ and let $\B a$ be a symmetric $d\times d$ matrix with nonnegative entries. Suppose $\B a$ can be written as $2^{\delta_{rs}}a_{rs} = w_{rs}+w_{sr}$, where $\delta$ is the Kronecker delta and $\B w$  is a nonnegative and irreducible matrix, satisfying $\B \nu \prec \B w \B 1$, with $\B 1$ the all ones vector.
    Let $\B n_n$ be a $d$-tuple with positive integer entries, such that  $\frac{1}{n}\B n\to \B \nu$ as $n\to\infty$. Let $(\B \eps_n)$ be a sequence of edge matrices such that $\frac 1n \B \eps_n\to \B a$.  The number of labelled connected multitype graphs $C(\B n, \B \eps_n)$ with profile $\B n$ and edge matrix $\B \eps_n$ edges reads
    \begin{equation}\label{eq:asympt_formula}
        C(\B n, \B \eps_n) = \prod_{r\le s} \binom{J_{rs}}{(\eps_{n})_{rs}} e^{n[\varphi(\nu,\B a)+\Obig{\frac{\log n}{n}}]} 
    \end{equation}
    with 
    \begin{equation} \label{eq: varphi}
        \varphi(\nu,\B a) =  \sum_{r} \nu_r \log\left(\frac{2y_{r}}{1+y_{r}}\right) -\sum_{r\le s} a_{rs}\left[1+\log \left(\frac{2y_ry_s}{y_r+y_s}\right) -\left(\frac{2y_ry_s}{y_r+y_s}\right)\right]
    \end{equation}
    and the quantities $(y_{r})_{r\in \mc{S}}$ solve the set of implicit equations
    \begin{equation} \label{eq: y_rs_equation}
        \sum_{s=1}^d 2^{\delta_{rs}}a_{rs}\frac{2y_{r}y_{s}}{y_{r}+y_{s}} =  \nu_r\log\left(\frac{1+y_{r}}{1-y_{r}}\right),
\end{equation}
for all $r\in \mc{S}$.
\end{theorem}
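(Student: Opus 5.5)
We would follow the strategy sketched in the introduction: realise a connected graph with profile $\B n$ and edge matrix $\B \eps_n$ as the giant component of a larger inhomogeneous random graph, and compare probabilities. Fix $N \ge n$ (linear in $n$, $N=\lfloor n/c\rfloor$) and an auxiliary profile $\B N$ with $N_r \approx \mu_r N$. Consider $\IRG{N}{\kappa}$ with $\kappa_{rs}$ constant in $n$, to be tuned. For a fixed vertex subset $U\subset V_{\B N}$ with profile $\B n$, the probability that the subgraph induced on $U$ is connected with edge matrix $\B\eps_n$ and that $U$ is a whole component factorises exactly as
\[
C(\B n,\B\eps_n)\prod_{r\le s}\Big(\tfrac{\kappa_{rs}}{N}\Big)^{\eps_{rs}}\Big(1-\tfrac{\kappa_{rs}}{N}\Big)^{J_{rs}-\eps_{rs}}\prod_{r,s}\Big(1-\tfrac{\kappa_{rs}}{N}\Big)^{n_r(N_s-n_s)}.
\]
Multiplying by $\prod_r\binom{N_r}{n_r}$ gives the probability $P_n$ that some component has profile $\B n$ and edge matrix $\B\eps_n$. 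It follows that
\[
C(\B n,\B\eps_n)= \frac{P_n}{\prod_r\binom{N_r}{n_r}\,(\text{explicit edge factors})}.
\]
Each explicit factor can be expanded with Stirling's formula, with errors $O(\log n)$ in the exponent, and the product $\prod\binom{J_{rs}}{\eps_{rs}}$ can be pulled out.

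The key step is to choose $(\B\mu,c,\kappa)$ so that the event is \emph{typical up to polynomial factors}, i.e. $P_n=e^{O(\log n)}$. We need the giant component of $\IRG{N}{\kappa}$ to have type proportions $c\mu_r\to\nu_r$ and the normalised edge counts $a_{rs}$, both holding at the law-of-large-numbers level. By~\cite{BJR07}, the survival probabilities $\rho_r$ of the multitype Poisson branching process solve $1-\rho_r=\exp(-\sum_s\kappa_{rs}\mu_s\rho_s)$. The giant has $\approx N\mu_r\rho_r$ vertices of type $r$ and $\approx \tfrac N2\kappa_{rs}\mu_r\mu_s(\rho_r\rho_s\cdot\text{const})$ edges; more precisely, an edge between $r$ and $s$ lies in the giant iff at least one endpoint survives, which gives an $a_{rs}$ of the form $\kappa_{rs}\mu_r\mu_s(\rho_r+\rho_s-\rho_r\rho_s)$. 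We would substitute $y_r$ as a monotone function of $\rho_r$; the guess is $y_r=\rho_r/(2-\rho_r)$, so that $\tfrac{1+y_r}{1-y_r}=\tfrac1{1-\rho_r}$ and $\tfrac{2y_r}{1+y_r}=\rho_r$. The branching equation then turns into \eqref{eq: y_rs_equation}. The hypothesis $\B\nu\prec\B w\B1$ with $\B w$ irreducible should be exactly the condition guaranteeing a solution $y\in(0,1)^d$, i.e. supercriticality with a positive survival vector. We would prove existence of $(y_r)$ by a fixed-point or monotonicity argument on the map defined by \eqref{eq: y_rs_equation}, using irreducibility for uniqueness.

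To get $P_n=e^{O(\log n)}$ and not merely $e^{o(n)}$, we would invoke the large-deviation estimates of~\cite{AKLP23} for the component structure. Local-limit-type precision is needed here: the probability that the unique giant has exactly the given profile and edge matrix is polynomially small. Since the giant is unique whp, there is no overcounting. If only an LDP with rate zero at the typical point is available, we would get $e^{o(n)}$, and we would then have to sharpen it with a local CLT for the joint vertex/edge counts of the giant. \textbf{This is the main obstacle.} We would first try to use the generating-function/exploration representation in~\cite{AKLP23} to extract an $O(\log n)$ error. Failing that, the same factorisation with a slightly tilted kernel gives matching upper and lower bounds.

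Finally, we would collect the exponents. The terms from $\binom{N_r}{n_r}$, the outside non-edge factor $\exp(-\sum\kappa_{rs}\nu_r(\mu_s-\nu_s)N\cdot)$, and the ratio $(\kappa/N)^{\eps}(1-\kappa/N)^{J-\eps}/\binom{J}{\eps}^{-1}$ combine into $n$ times an expression in $\nu,a,\rho$. Rewriting $\kappa_{rs}$ and $\mu$ via $y$ should reduce this to $\varphi$ in \eqref{eq: varphi}. Here the term $\nu_r\log\rho_r=\nu_r\log\tfrac{2y_r}{1+y_r}$ arises from $\binom{N_r}{n_r}$ together with the external-vertex factors, and the bracket arises from $\eps\log(\kappa/N)+\kappa J/N$ against Stirling for $\binom{J}{\eps}$. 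This bookkeeping is routine; the dependence on the free parameter $c$ must cancel, which serves as a consistency check.
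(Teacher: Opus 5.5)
Your route differs from the paper's. The paper never enlarges the vertex set. It works on exactly $\B n$ vertices with $\kappa_{rs}=\frac{2^{\delta_{rs}}a_{rs}}{\nu_r\nu_s}\frac{2y_ry_s}{y_r+y_s}$ and proves by G\"artner--Ellis an LDP for the edge matrix given connectivity (the tilted conditional moment generating function is the unconditional one times $q_n(\B n,\tilde{\B\kappa}_n(\B t))/q_n(\B n,\B\kappa_n)$). It then reads $C(\B n,\B\eps)$ off $q_n(\B n,\B\kappa_n,\B\eps)=C(\B n,\B\eps)\prod_{r\le s}p_{rs}^{\eps_{rs}}(1-p_{rs})^{J_{rs}-\eps_{rs}}$ and Theorem~\ref{th: limit_connectivity}.

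Your parametrisation is nonetheless right. With $y_r=\rho_r/(2-\rho_r)$ one gets $\frac{2y_ry_s}{y_r+y_s}=\frac{\rho_r\rho_s}{\rho_r+\rho_s-\rho_r\rho_s}$, and the survival equation becomes \eqref{eq: y_rs_equation}. Your $N_r\approx n\nu_r/\rho_r$ is the paper's $n\nu^*_r$. The bookkeeping does produce $\varphi$: the piece $\nu_r\frac{1-\rho_r}{\rho_r}\log(1-\rho_r)$ of the entropy of $\binom{N_r}{n_r}$ cancels the outside non-edge factor by the fixed-point equation and symmetry of $\B\kappa$. The upper bound is also free, since the number of components with profile $\B n$ is deterministically at most $N/n$.

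The genuine gap is the lower bound. You need the \emph{exact} pair $(\B n,\B\eps_n)$ to be realised by a component with probability $e^{-o(n)}$. A law of large numbers or an LDP for the giant only controls windows of width $\delta n$, never single lattice points. So your claim that an LDP alone would give $e^{o(n)}$ is unjustified, and re-tilting the kernel cannot by itself reveal the value of $C$ at one point. The multitype local CLT for the giant's joint profile/edge counts that you fall back on is neither available nor needed.

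The missing ingredient is a regularity property of $C$ itself. The paper uses the double counting $\eps_{rs}C(\B n,\B\eps)\ge(J_{rs}-\eps_{rs}+1)\,C(\B n,\B\eps-\B e_{rs})$, since adding an edge keeps the graph connected. Summing over the one-sided window $\B\eps/n\in\mc R_\delta$ below $\floor{n\B a^*}$, the total is at most $e^{O(\delta n)}$ times the corner term. The conditional LDP gives this window mass $e^{-o(n)}$ relative to $q_n(\B n,\B\kappa_n)$, and letting $\delta\to0$ closes the argument.

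In your framework the giant's vertex profile fluctuates too, so you would also need a vertex analogue, plus a slightly shifted kernel so that the window lies below the target. An example is $C(\B n+\B e_r,\B\eps+\B e_{rs})\ge n_s\,C(\B n,\B\eps)$, obtained by attaching a new type-$r$ leaf to a type-$s$ vertex with $a_{rs}>0$. Working on exactly $\B n$ vertices is how the paper avoids this.

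Two further remarks. First, the paper's argument itself only gives corrections $e^{o(n)}$, as its introduction states; without rates for $\B n/n\to\B\nu$ and $\B\eps_n/n\to\B a$ an $O(\log n/n)$ error cannot be expected, so local-limit precision is beyond what is proved. Second, existence of $\B y\in(0,1)^d$ is precisely where the hypothesis on $\B w$ enters, and ``a fixed-point or monotonicity argument'' leaves it open. The paper takes the Bradley--Terry maximiser $\B x$ with $\sum_s(w_{rs}+w_{sr})\frac{x_s}{x_r+x_s}=\sum_s w_{rs}$ and applies Poincar\'e--Miranda on $\prod_r[\eps x_r,1-\eps]$. There $\B\nu\prec\B w\B 1$ is exactly the sign condition on the faces $y_r=\eps x_r$.
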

\begin{remark}
Notice that the assumptions on $\B \eps$ such that there exists at least one \emph{connected} multitype graph $(V_{\B n}, E_{\B n})$ with profile ${\B n}$ and edge matrix ${\B \eps}$ are non-trivial as soon as the number of types $d>1$. In the one type case, it is enough to have at least $n-1$ edges over $n$ vertices to connect all of them, while assumptions for $d>1$ involve non-trivially the profile $\B n$ and the matrix $\B \eps$ and, to the best of our knowledge, they are not know explicitly for general $d$, see Section~\ref{sec: trees}.
Indeed, condition $\B \nu \preceq \B w\B 1$ is actually necessary condition for $\B a$ to be the limit of the normalized edge matrix of a connected multitype graph (see section \ref{sec: trees}), but it is not sufficient. What we require in addition for the theorem to hold is that $\B w$ is irreducible.  This allows us to prove that~\eqref{eq: y_rs_equation} has a solution away from the boundary (i.e. strictly positive in all entries). The existence of such a vector $(y_{r})_{r\in \mc{S}}$ is enough for the proof to be carried out, hence a \emph{sufficient} condition for $\B a$ to be the rescaled edge matrix of a sequence of \emph{connected} multitype graphs.
\end{remark}

In order to see \eqref{eq:asympt_formula} as the natural multitype generalization of formula (1.4) in~\cite{BeCaMcKa90}, notice that the product of binomial factors is simply the number of different multitype graphs that can be constructed with profile $\B n$ and edge matrix $\B \eps_n$. Then, writing $\sum_{r \le s} a_{rs}\frac{2y_{r}y_{s}}{y_{r}+y_{s}}=\frac{1}{2}\sum_{r,s} 2^{\delta_{rs}}a_{rs}\frac{2y_{r}y_{s}}{y_{r}+y_{s}} = \sum_r \nu_r\log\sqrt{\frac{1+y_{r}}{1-y_{r}}}$:
\begin{align*}
        \varphi({\B{\nu}},\B{a})  &=  \sum_r \log \left[\left(\frac{2y_{r}}{\sqrt{1-y^2_{r}}}\right)^{\nu_r}\left(ey_{r}\right)^{-a_{rr}} \right] + \sum_{r<s}\log \left(e{\frac{2y_ry_s}{y_r+y_s}}\right)^{-a_{rs}}.
\end{align*}
We immediately recognise a diagonal term taking exactly the same form of the main exponential term in~\cite{BeCaMcKa90}, while the off diagonal is a completely new term that we could not guess from the one-type case.

Putting together the proof of Theorem~\ref{thm:main} and the proof of Theorem~3.6 from~\cite{AKLP23}, we see that a connected multitype graph with vertex profile $\approx n\B \nu$ and edge matrix $\approx n \B a$ can be realised with high probability as the (unique) giant component of the inhomogeneous random graph $\Gcal(\B n^*, \B \kappa^*)$ with vertex profile $\B n^*\approx n \B \nu^*$ and edge probability $(p^*_{rs}\approx \frac 1n \B\kappa^*_{rs})_{r,s\in\mc{S}}$, where
\begin{align*}
\nu^*_r&=\frac{\nu_r}{1-\e^{-\kappa^*\nu_r}}\qquad r \in \mc{S};\\
 \kappa^*_{rs}&=   \frac{2^{\delta_{rs}}a_{rs}}{\nu_r\nu_s}\frac{2y_ry_s}{y_r+y_s} \qquad r,s \in \mc{S}.
\end{align*}
It is worth noticing that such a use of the \emph{probabilistic method} to obtain upper estimates of such a quantity was already used in~\cite[Lemma~3.7]{BeCaMcKa90} in the one-type case. However, the choice of the proper size and edge probability of this \emph{larger} Erd\H{o}s-R\`enyi graph is not justified as being the choice such that the giant component is has a prescribed size and excess of edges with high probability. 

A different use of the \emph{probabilistic method} to get the same quantity appears in~\cite{VdHS06}, where the probability of an Erd\H{o}s-R\`enyi graph with $n$ vertices is tuned, such that its exploration can be properly studied the asymptotics of the probability that the graph is connected can be obtained. The study of such an exploration seem to be extremely difficult in our multitype setting.

In contrast to the approaches in~\cite{BeCaMcKa90, VdHS06}, our large deviation approach applies to multitype graphs but identifies only the exponential (in $n$) leading terms. We believe that the use of local weak limits of the larger Erd\H{o}s-R\`enyi graph (see~\cite{vdH2024random}) might help in getting finer asymptotics and that this would be useful in understanding the constant prefactor ($\e^{a(x)}$ in~\cite{BeCaMcKa90}) in the multitype case as well.

\section{Proof of Theorem~\ref{thm:main}}
\subsection{The inhomogeneous random graph model}
In order to study the quantity $  C(\B n, \B \eps_n)$, we introduce the inhomogeneous random graph model, that is a natural generalisation of the famous Erd\H{o}s-R\`enyi random graph, see the seminal paper~\cite{BJR07}. 

Let the non-empty finite set $\mc{S}$ be fixed. We consider a random graph on the vertex set $[n]=\{1,\dots, n\}$ and assign a (possibly random) type from $\mc{S}$ to each vertex. We write $n_r$ for the number of vertices with type $r\in \Scal$, we collect it in the type vector $\B n = (n_r)_{r\in \mc{S}}$, and we assume $\B n/n \to \B \nu \succ \B 0$ as $n\to \infty$. 
The edges of this graph are undirected and randomly drawn; self and multiple edges are excluded. The $\binom{n}{2}$ possible edges are sampled independently.
The probability to draw an edge between two vertices with types $r$ and $s$ is $p_{rs}$; this defines a map $\B p\colon\mc{S}\times \mc{S}\to[0,1]$. The resulting random graph $\Gcal$ is called the inhomogeneous random graph on type vector $\B n$ and function of probabilities $\B p$. In particular, we are interested in the sparse regime as $n\to\infty$, i.e. in the case where the expected number of edges per vertex is of finite order (or, equivalently, the expected number of edges in the graph is proportional to the size of the graph). This is the case if the probabilities $p_{rs}$ are of order $1/n$. Actually, we impose that they are given by
$$
p_{rs}=1\wedge \frac 1n \kappa_n(r,s),\qquad r,s\in\Scal,
$$
where $\B \kappa_n\colon \Scal\times\Scal\to[0,\infty)$ is a symmetric non-negative bounded function, called the \emph{connection kernel}. We focus on the case in which $\B \kappa_n\to \B \kappa$ as $n$ grows to infinity, for a limiting connection kernel $\B \kappa$ (a symmetric $d\times d$ matrix with non-negative entries in our case). We suppose that $\B \kappa$ is  irreducible. Without loss of generality, we are from now on assuming that $\frac 1n\|\B \kappa_n\|_{\infty}\leq 1$ (always true for $n$ large enough), and hence $\kappa_n(r,s)/n$ is a probability for any $r,s\in\Scal$. We write $\Gcal(\B n, \B \kappa_n)$ to indicate the resulting random graph and we use, respectively, $\smash{\mathbb{P}_{\B n, \B {\kappa}_n}}$ and $\smash{\mathbb{E}_{\B n, \B {\kappa}_n}}$ for its probability law and expectation. 

Let $\symm$ be the space of symmetric $d\times d$ matrices equipped with the inner product $\langle \B A,\B B\rangle\coloneqq \sum_{i\leq j} A_{ij}B_{ij}$. We have that $\symm$ is dual to itself, hence $\symm^* = \symm$. Let  $\smash{{\mc E}_{rs}^{(n)}}$ be the number of edges between pairs of type $r$ and $s$ in $\irg{n}{\kappa}$ and consider the $\symm $ - valued random variable $\smash{\B {\mc E}^{(n)} = \smash{(\mc E}_{rs}^{(n)})}$. The probability that the inhomogeneous random graph has edge configuration $\B \eps\in \symm$ is 
\[
    \mathbb{P}_{ \B n, \B {\kappa}_n} \left({\B {\mc E}^{(n)}=\B \eps} \right) = \prod_{r\le s} \binom{J_{rs}}{\eps_{rs}}p_{rs}^{\eps_{rs}}(1-p_{rs})^{J_{rs}-\eps_{rs}},
\]
where $J_{rs} $ is defined in \eqref{eq:edges_max}. Let $\B t \in \symm $, then  the normalized logarithmic moment generating function of the random variable $\B {\mc{E}}^{(n)}$ has limit
\begin{align} \label{eq: multid_moment_gen_fun}
    \Lambda_{\B \nu, \B \kappa}(\B t) = \lim_{n\to \infty}  \frac{1}{n}\log{\EV{e^{\left\langle\B{t}, \B{\mc{E}}^{(n)} \right\rangle}}} =\frac 12 \sum_{r s} {\nu_r \kappa_{rs} \nu_s} (e^{t_{rs}}-1).
\end{align}
We further introduce some notation for the probability of the graph to be connected and to be connected with given edge statistics:
\begin{align*}
    q_n(\B n, \B \kappa_n) & \coloneq \mathbb{P}_{ \B n, \B {\kappa}_n} \left(\Gcal\text{ is connected}\right),\quad
    q_n(\B n, \B \kappa_n, \B \eps)  \coloneq \mathbb{P}_{ \B n, \B {\kappa}_n} \left(\Gcal\text{ is connected}, \B {\mc E}^{(n)}=\B \eps\right).
\end{align*}
In order to prove our result, we crucially use  the logarithmic asymptotics of the probability of connectedness, $q_n(\B n, \B \kappa_n)$ from~\cite[Theorem 3.6]{AKLP23}, which we state here for completeness.
\begin{theorem} [Limit of probability of connectedness from~\cite{AKLP23}]\label{th: limit_connectivity}
Let $\B \nu$ be a probability vector on $\Scal$ and $\B \kappa$ a kernel. Let $\B n/n\xrightarrow{} \B \nu$ and $\B \kappa_n \xrightarrow{} \B \kappa$, then
    \begin{equation}\label{eq:connect_LDP}
        \lim_{n\to \infty} \ld{q_n(\B n, \B \kappa_n)} = \left\langle \B \nu, \log\left(1-e^{-\B{\kappa}\B{\nu}}\right)\right\rangle.
    \end{equation}
\end{theorem}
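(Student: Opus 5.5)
We would prove Theorem~\ref{th: limit_connectivity} with the same ``larger graph'' idea that drives the rest of the paper. The plan is to realise a connected graph on profile $\B n$ as the giant component of a bigger inhomogeneous random graph $\Gcal(\B N,\B\kappa'_N)$, and then read off $q_n(\B n,\B\kappa_n)$ from a first-moment identity for components. Throughout, the notation $\approx$ means equality up to factors $e^{o(n)}$.

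\textbf{Step 1: the exact identity.} Let $\B N\succeq\B n$ and choose the kernel so that edge probabilities agree: $\kappa'_N(r,s)/N=\kappa_n(r,s)/n$, i.e.\ $\B\kappa'=\B\kappa N/n$. For a fixed vertex set $W$ of profile $\B n$ in $\Gcal(\B N,\B\kappa'_N)$, the event that $W$ is a connected component is the intersection of two independent events: $\Gcal[W]$ is connected, and there are no edges from $W$ to its complement. Let $Z_{\B n}$ be the number of components of profile exactly $\B n$. Then
\[
\mathbb{E}[Z_{\B n}]=\prod_r\binom{N_r}{n_r}\; q_n(\B n,\B\kappa_n)\prod_{r,s}\Bigl(1-\tfrac{\kappa_n(r,s)}{n}\Bigr)^{n_r(N_s-n_s)} .
\]

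\textbf{Step 2: tuning $\B N$.} Write $\B N\approx n\B\mu$ and $\B\kappa'\approx\B\kappa\,|\B\mu|$. We look for a limiting profile $\B\mu$ such that the giant component of $\Gcal(\B N,\B\kappa'_N)$ has type profile $\approx n\B\nu$. By~\cite{BJR07}, this amounts to $\nu_r=\mu_r\rho_r$, where the survival probabilities satisfy
\[
1-\rho_r=\exp\bigl(-(\B\kappa\B\mu\B\rho)_r\bigr)=\exp\bigl(-(\B\kappa\B\nu)_r\bigr).
\]
Hence the natural choice is $\mu_r=\nu_r/(1-e^{-(\B\kappa\B\nu)_r})$, the same formula as $\nu^*$ in the introduction. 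Irreducibility of $\B\kappa$ and $\rho_r>0$ make the enlarged graph supercritical with a unique giant, so this $\B\mu$ is admissible for every kernel $\B\kappa$. We then take the logarithm of the identity in Step~1, using Stirling for the binomials and $\log(1-x/n)\approx -x/n$ for the no-out-edge factor. With $\mu_r-\nu_r=\mu_re^{-(\B\kappa\B\nu)_r}$, the entropy terms and the term $-\sum_{r,s}\nu_r\kappa_{rs}(\mu_s-\nu_s)$ should telescope, leaving
\[
\frac1n\log\mathbb{E}[Z_{\B n}]=\frac1n\log q_n(\B n,\B\kappa_n)-\bigl\langle\B\nu,\log(1-e^{-\B\kappa\B\nu})\bigr\rangle+o(1).
\]
This is a routine but careful computation.

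\textbf{Step 3: showing $\mathbb{E}[Z_{\B n}]=e^{o(n)}$.} Once this holds, the theorem follows.
\begin{itemize}
\item Upper bound: a component of profile $\B n$ has more than half of the vertices once $\mu_r<2\nu_r$ for all $r$. If that fails, we instead use that with probability $1-e^{-\Omega(n)}$ there is no second component of linear size. Either way $Z_{\B n}\le 1$ up to an exponentially small correction, so $\mathbb{E}[Z_{\B n}]\le 1+o(1)$.
\item Lower bound: we need $\mathbb{P}(\text{giant has profile exactly }\B n)\ge e^{-o(n)}$.
\end{itemize}

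\textbf{Main obstacle: the lower bound.} The giant profile concentrates within $o(n)$ of $n\B\nu$, and there are only polynomially many profiles in that window. Pigeonhole therefore gives some nearby profile $\B n'$ that is hit with probability at least $1/\mathrm{poly}(n)$, but not necessarily $\B n$ itself. We would deal with this in one of two ways.
\begin{itemize}
\item First, perturb $\B N$ by $O(\sqrt n)$ (or $o(n)$) so that the pigeonholed profile becomes exactly $\B n$. The shifted expected profile moves continuously in $\B N$, and Step~2's exponent is continuous in $(\B\mu,\B\kappa)$, so this adjusts only by $e^{o(n)}$.
\item Alternatively, prove a monotonicity/comparison estimate $q_n(\B n',\B\kappa_n)\ge e^{-o(n)}q_n(\B n,\B\kappa_n)$ for $\|\B n-\B n'\|=o(n)$. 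This can be done by attaching or removing $o(n)$ pendant vertices, which costs only $e^{o(n)}$ in probability.
\end{itemize}
Combining either route with Step~2 yields both the upper and the lower bound in~\eqref{eq:connect_LDP}.
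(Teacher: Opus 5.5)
Your strategy is the one the paper attributes to \cite{AKLP23}; the paper itself only cites the result. It takes a larger graph with the same edge probabilities $\kappa_n/n$ whose giant has profile $\approx\B n$. Steps~1 and~2 are correct. With $c_r=(\B\kappa\B\nu)_r$ and $\mu_r-\nu_r=\mu_re^{-c_r}$, the binomial entropy equals $\sum_r[-\nu_r\log(1-e^{-c_r})+(\mu_r-\nu_r)c_r]$. By symmetry of $\B\kappa$, the no-out-edge term equals $-\sum_s(\mu_s-\nu_s)c_s$, so the cancellation is exact. The upper bound needs no probabilistic input: components of profile $\B n$ are disjoint, so $Z_{\B n}\le N/n=O(1)$ deterministically.

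The gap is in the lower bound. Route~(a) has no mechanism behind it. Pigeonhole only says that \emph{some} profile in the window carries mass $1/\mathrm{poly}(n)$, and shifting $\B N$ gives no control over which one. Forcing it to be $\B n$ would need a local limit theorem for the multitype giant's profile.

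Route~(b) is the right idea, but only half of it is cheap. Attaching works. If $\B n'\preceq\B n$ and $W'$ has profile $\B n'$ inside $\Gcal(\B n,\B\kappa_n)$, the events ``$\Gcal[W']$ is connected'' and ``every vertex outside $W'$ has a neighbour in $W'$'' are independent. The latter has probability at least $c^{\|\B n-\B n'\|_1}$ with $c>0$, by irreducibility. Removing vertices of prescribed types, however, does not preserve connectivity in general (cut vertices). Controlling this at exponential scale under the conditioned law is not routine, yet a pigeonholed $\B n'$ with some $n'_r>n_r$ needs exactly that.

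To fix it, force $\B n'\preceq\B n$:
\begin{itemize}
\item Take $\B N\approx n\B\mu^{\eta}$ with $\mu^{\eta}_r=(\nu_r-\eta)/(1-e^{-(\B\kappa(\B\nu-\eta\B 1))_r})$, keeping the edge probabilities $\kappa_n/n$.
\item By \cite{BJR07}, the giant's type-$r$ count is $n(\nu_r-\eta)+o_{\mathbb P}(n)$. So with probability tending to one its profile lies in a box of at most $n^d$ profiles, all $\preceq\B n$ and within $O(\eta n)$ of $\B n$.
\item Pigeonhole then yields $\B n'$ in the box with $\mathbb E[Z_{\B n'}]\ge n^{-d}/2$.
\item Step~1 at $\B n'$, together with continuity of the Step~2 exponent, gives $\frac{1}{n}\log\mathbb P(\Gcal[W']\text{ connected})\ge\langle\B\nu,\log(1-e^{-\B\kappa\B\nu})\rangle-\omega(\eta)-o(1)$ with $\omega(\eta)\to0$.
\item Attaching the $O(\eta n)$ missing vertices costs $e^{-O(\eta n)}$. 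Let $n\to\infty$ and then $\eta\downarrow 0$.
\end{itemize}
Because the edge probabilities are $\kappa_n/n$ at every stage, you never need continuity of $q$ in the kernel, which would be circular.
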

Loosely speaking, the idea of the proof in~\cite[Theorem 3.6]{AKLP23} consists in finding a \emph{larger inhomogeneous graph} with the same connection kernel $\B \kappa_n$ whose (unique) giant component has profile $\approx \B n$.  Here, we use the fact that $q_n(\B n, \B \kappa)$ can be written simply summing terms like $q_n(\B n, \B \kappa, \B \eps)$, that is
\begin{equation}
    q_n(\B n, \B \kappa_n) = \sum_{\B \eps} q_n(\B n, \B \kappa_n, \B \eps)=  \sum_{\B \eps}C(\B n, \B \eps_n )\prod_{r\le s}p^{\eps_{rs}}_{rs} (1-p_{rs})^{J_{rs}-\eps_{rs}}
\end{equation}
where the sum runs over all possible edge matrices $\B \eps $. Using a large deviation principle for the edge matrix of the graph conditioned on being connected, we will prove that such a sum rapidly concentrates (at an exponential speed) around one typical value $\B \eps^*$, depending on $\B \nu$ and $\B \kappa$. Hence, tuning carefully the connection kernel $\B \kappa$ one can find an inhomogeneous graph that has the desired vertices profile (i.e. $\approx \B n$) and the desired edge matrix ($\approx \B \eps$).
\subsection{Some useful results}
Here we list some useful results that we need in order to prove our main result. We start with a large deviation result for the edge matrix of an inhomogeneous random graph conditioned to be connected\footnote{The proof of this LDP comes from an ongoing joint work on connected random graphs of the first author together with B. Chin, M. Dickson, S. Donderwinkel, R. van der Hofstad and N. Malhotra. Since the work is still in progress and we intend to be self-contained, we write here the proof of this proposition as well in the special finite-type case.}. Let us define, for a connection kernel $\B \kappa$ and $\B t\in \symm$, the matrix $\tilde{\B{\kappa}}({\B{t}})=(\e^{t_{rs}}\kappa_{rs})_{(r,s)\in \mc{S}^2}$. Notice that $\tilde{\B{\kappa}}({\B{t}})\in \symm$. We define
\begin{equation}
    \varphi_{\B \nu, \B \kappa} (\B{t}) \coloneq    \Lambda_{\B \nu, \B \kappa}(\B t)    +   q(\B \nu, \tilde{\B{\kappa}}({\B{t}})) -  q(\B \nu, \B{\kappa}) = \left \langle \B \nu, \log\left[\frac{\sinh \left(\frac{1}{2} \tilde{\B{\kappa}}(\B{t})\B{\nu} \right)}{\sinh \left(\frac{1}{2} \B{\kappa}\B{\nu} \right)}\right]\right \rangle,
\end{equation}
by combining \eqref{eq: multid_moment_gen_fun} and \eqref{eq:connect_LDP}. Consequently, the Legendre transform of $ \varphi_{\B \nu, \B \kappa} $ writes as
\begin{equation}\label{eq:rate_funct}
    \mc{J}_{\B \nu, \B \kappa}(\B A)=\sup_{\B t \in \symm }\{ \langle \B t, \B A\rangle - \varphi_{\B \nu, \B \kappa} (\B{t})\},
\end{equation}
for any $\B A\in \symm$. 
\begin{proposition}[LDP for the number of edges given connectedness] \label{prop: LDP}
        Let $\B{\mc{E}}^{(n)}$ be the random edge configuration in the inhomogeneous random graph $\irg{n}{\kappa}$ with $\B n/n\xrightarrow{} \B \nu$ and $\B \kappa_n \xrightarrow{} \B \kappa$. Then, the sequence of laws 
    $ \mathbb{P}_{ \B n, \B \kappa_n} \left(\frac{\B{\mc{E}}^{(n)}}{n} \in \cdot \ \vert \ \Gcal \text{ is connected} \right)$
    satisfies an LDP with speed $n$ and rate function $\mc{J}_{\B \nu, \B \kappa}$ from~\eqref{eq:rate_funct}. Moreover, the rate function has unique global minimizer $\B{a}^*\in \symm$ given by
    \begin{align} \label{eq: typical_edges}
    a^*_{rs} = \frac{1}{2^{\delta_{rs}}} \frac{\nu_r \kappa_{rs}\nu_s}{2} \left(\coth\left(\frac{1}{2}(\B{\kappa}\B{\nu})_r \right)  +\coth\left(\frac{1}{2}(\B{\kappa}\B{\nu})_s\right)\right) \qq{for } r\le s.
\end{align}\end{proposition}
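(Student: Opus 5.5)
The proof is a Gärtner--Ellis argument for the tilted conditional law, followed by an explicit minimisation of the rate function. For $\B t\in\symm$ we compute the limiting normalized log-moment generating function of $\B{\mc E}^{(n)}$ under the conditional law. The key identity is an exponential change of measure:
\[
\EVn{e^{\langle \B t,\B{\mc E}^{(n)}\rangle}\mathds 1\{\Gcal\text{ conn.}\}}
=\prod_{r\le s}\bigl(1-p_{rs}+p_{rs}e^{t_{rs}}\bigr)^{J_{rs}}\; q_n(\B n,\tilde{\B\kappa}_n),
\]
where $\tilde{\B\kappa}_n$ is the kernel with tilted edge probabilities $\tilde p_{rs}=p_{rs}e^{t_{rs}}/(1-p_{rs}+p_{rs}e^{t_{rs}})$. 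This identity holds because tilting independent Bernoulli edges gives again independent Bernoulli edges. Since $n\tilde p_{rs}\to e^{t_{rs}}\kappa_{rs}$, we have $\tilde{\B\kappa}_n\to\tilde{\B\kappa}(\B t)$.

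We divide by $q_n(\B n,\B\kappa_n)$ and take $\frac1n\log$. The product term converges to $\Lambda_{\B\nu,\B\kappa}(\B t)$ by \eqref{eq: multid_moment_gen_fun}. Theorem~\ref{th: limit_connectivity}, applied to both $\tilde{\B\kappa}_n$ and $\B\kappa_n$, handles the two connectivity probabilities; irreducibility is preserved because $e^{t_{rs}}>0$. The limit is therefore exactly $\varphi_{\B\nu,\B\kappa}(\B t)$. The closed $\sinh$ form follows from
\[
\tfrac12\sum_{r,s}\nu_r\kappa_{rs}\nu_s(e^{t_{rs}}-1)=\tfrac12\langle\B\nu,\tilde{\B\kappa}\B\nu-\B\kappa\B\nu\rangle,
\]
combined with $\log(1-e^{-x})=\log(2\sinh(x/2))-x/2$.

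Next we check the hypotheses of Gärtner--Ellis. The limit $\varphi_{\B\nu,\B\kappa}$ is finite on all of $\symm$, so the origin lies in the interior of its domain. It is also differentiable, being smooth in $\B t$: each $(\tilde{\B\kappa}\B\nu)_r>0$ by irreducibility, so $\log\sinh$ is smooth there. Essential smoothness is then automatic, because the domain is the whole space. The Gärtner--Ellis theorem in the finite-dimensional space $\symm$ gives the LDP with speed $n$ and rate function the Legendre transform $\mc J_{\B\nu,\B\kappa}$ of \eqref{eq:rate_funct}. The tightness caveat is harmless: $\varphi$ is finite everywhere, so exponential tightness holds.

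For the minimiser, $\mc J$ is convex and nonnegative, with $\mc J(\B A)=0$ if and only if $\B A\in\partial\varphi(\B 0)$. Since $\varphi$ is differentiable at $\B 0$, this subdifferential is the singleton $\{\nabla\varphi(\B 0)\}$. The zero of $\mc J$ is therefore unique and equals $\B a^*=\nabla\varphi_{\B\nu,\B\kappa}(\B 0)$. We compute the gradient entrywise. For $r<s$, the entry $t_{rs}$ appears in $(\tilde{\B\kappa}\B\nu)_r$ through $e^{t_{rs}}\kappa_{rs}\nu_s$ and in $(\tilde{\B\kappa}\B\nu)_s$ through $e^{t_{rs}}\kappa_{rs}\nu_r$. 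Differentiating $\nu_r\log\sinh(\frac12(\tilde{\B\kappa}\B\nu)_r)$ therefore gives $\nu_r\cdot\frac12\kappa_{rs}\nu_s\coth(\frac12(\B\kappa\B\nu)_r)$, plus the symmetric term with $r$ and $s$ exchanged. For $r=s$ only one term appears, which accounts for the factor $2^{-\delta_{rs}}$. This reproduces \eqref{eq: typical_edges}.

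The main obstacle is the exact tilting identity together with the uniform applicability of Theorem~\ref{th: limit_connectivity} to the $n$-dependent tilted kernels. This requires $\tilde{\B\kappa}_n\to\tilde{\B\kappa}(\B t)$ for each fixed $\B t$, which holds. It also requires the cap $p_{rs}=1\wedge\kappa_n/n$ not to interfere, which is true for $n$ large. Once both points are verified, the rest is standard.
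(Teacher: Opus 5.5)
Your proposal is correct and takes essentially the same route as the paper. Both arguments use the exponential tilting identity, which turns the conditional moment generating function into $\mathbb{E}_{\B n,\B\kappa_n}[\e^{\langle\B t,\B{\mc E}^{(n)}\rangle}]\,q_n(\B n,\tilde{\B\kappa}_n(\B t))/q_n(\B n,\B\kappa_n)$, then identify the limit $\varphi_{\B\nu,\B\kappa}$ via Theorem~\ref{th: limit_connectivity} and \eqref{eq: multid_moment_gen_fun}, apply G\"artner--Ellis for the LDP, and obtain $\B a^*=\nabla\varphi_{\B\nu,\B\kappa}(\B 0)$ from convexity and differentiability at the origin. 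Your explicit checks are correct and somewhat more careful than the paper's write-up: the $\sinh$ identity, smoothness from $(\tilde{\B\kappa}\B\nu)_r>0$, the subdifferential argument for uniqueness, and the entrywise gradient with the $2^{-\delta_{rs}}$ factor.
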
 

\begin{proof}
We will rely on G\"artner-Ellis theorem, see the statement in Theorem~\ref{th: GE}. Namely, first we derive the logarithmic asymptotics of the moment generating function and then we deduce the large deviation principle. Let $\B{t} \in \symm $ and consider the moment generating function of $\smash{\B{\mc{E}}^{(n)}}$ given the connectedness of the random graph. We can write
\begin{align*}
    {\EVn{\e^{\left\langle\B{t}, \B{\mc{E}}^{(n)}\right\rangle} \ \vert \  \text{connectedness} }} & = \frac{\EVn{\e^{\left\langle\B{t}, \B{\mc{E}}^{(n)}\right\rangle} \mathds{1}_{\IRG{n}{\kappa} \ \text{connected}} }}{q_n(\B n, \B \kappa_n)}.
\end{align*}
Let $\B{\eps}(G)$ be the edge matrix of a connected multitype graph $G$. We can write
\begin{align}
   \EVn{ {\e^{\left\langle\B{t},  \B{\mc{E}}\right\rangle} \mathds{1}_{\IRG{n}{\kappa} \ \text{connected}} }}  &= \sum_{G\text{ connected}}  \e^{\sum_{r\le s} t_{rs} \eps_{rs}(G) } \prod_{r\le s}p_{rs}^{\eps_{rs}(G)}(1-p_{rs})^{I_{rs}-\eps_{rs}(G)} \notag \\ 
   &={\EVn{\e^{\left\langle\B{t}, \B{\mc{E}}^{(n)}\right\rangle}}}\sum_{G\text{ connected}} \frac{\prod_{r\le s}(\e^{t_{rs}}p_{rs})^{\eps_{rs}(G)}(1-p_{rs})^{J_{rs}-\eps_{rs}(G)}}{{\EVn{\e^{\left\langle\B{t}, \B{\mc{E}}\right\rangle}}}} \notag \\ 
   & = {\EVn{e^{\left\langle\B{t}, \B{\mc{E}}^{(n)}\right\rangle}}} \sum_{G\text{ connected}} \prod_{r\le s}\frac{(\e^{t_{rs}}p_{rs})^{\eps_{rs}(G)}(1-p_{rs})^{J_{rs}-\eps_{rs}(G)}}{(1+p_{rs}(\e^{t_{rs}}-1))^{I_{rs}}} \notag \\
   & = {\EVn{\e^{\left\langle\B{t}, \B{\mc{E}}^{(n)}\right\rangle}}} {q_n(\B n, \tilde{\B{\kappa}}_n({\B{t}}))}, \notag
\end{align}
where the kernel sequence $\tilde{\B{\kappa}}_n({\B{t}})$ is defined as follows, for all $s,r\in \mc{S}$: 
\begin{align*}
    (\tilde{\B{\kappa}}_n({\B{t}}))_{rs} &=  n\frac{e^{t_{rs}} \frac{(\kappa_n(r,s)}{n}}{1+\frac{\kappa_n(r,s)}{n}\left(e^{t_{rs}}-1\right)} =  e^{t_{rs}}{\kappa_n(r,s)}+\Obig{\frac{1}{n}} \xrightarrow{n\to\infty}  (\tilde{\B{\kappa}}({\B{t}}))_{rs}.
    \end{align*}
Using all the above, combined with equation \eqref{eq: multid_moment_gen_fun} and Theorem~\ref{th: limit_connectivity}, we find that
 \begin{align*}
    {\EVn{\e^{\left\langle\B{t}, \B{\mc{E}}^{(n)}\right\rangle} \ \vert \  \text{connectedness} }} & = \EVn{\e^{\left\langle\B{t}, \B{\mc{E}}^{(n)}\right\rangle} \mathds{1}_{\IRG{n}{\kappa} \ \text{connected}} }\frac{q_n(\B n, \tilde{\B{\kappa}}_n({\B{t}}))}{q_n(\B n, \B \kappa_n)},
\end{align*} hence
$\lim_{n\to \infty}\frac{1}{n} \log {\mathbb{E}_{\B n, \B \kappa_n}[{\e^{\langle\B{t}, \B{\mc{E}}^{(n)}\rangle} \ \vert \  \text{connectedness} }]}= \varphi_{\B \nu, \B \kappa} (\B{t}).$
By the G{\"a}rtner-Ellis theorem, the law of the random variable $\smash{\B{\mc{E}}^{(n)}/n}$ conditioned on the connectedness of the graph satisfies a large deviation principle with rate function $ \mc{J}_{\B \nu, \B \kappa}(\cdot)$.
The function $x\mapsto \log (2\sinh e^x)$ is convex on $\R$ and differentiable in zero, therefore $\varphi_{\B \nu, \B \kappa}  (\cdot)$ is convex as well 
and the unique global minimizer is achieved at $\B{a}^*=\grad{\varphi}(\B{0})$, that is given by \eqref{eq: typical_edges}.
\end{proof}
Note that $\B a^*$ can be written as $2^{\delta_{rs}}a_{rs}=w_{rs}+w_{sr}$, where 
$w_{rs} = \frac{\nu_r \kappa_{rs} \nu_s}{2} \coth \left(\frac{1}{2}(\B \kappa \B \nu)_r\right)$. The formulae of both $\B a^*$ and $\B w$ are always well-defined, as a consequence of the irreducibility of the connection kernel $\B \kappa$. Moreover, $w_{rs}=0  \iff a^*_{rs}=0 \iff \kappa_{rs}=0$, thus both $\B a^*$ and $\B w$ are irreducible as well. One can easily check that $\B w \B 1 \succ \B \nu$, hence, by the criterion \eqref{eq:directed-cofactor-revised} in the appendix,
\begin{equation} \label{eq: existence_optimal_graph}
    C(\B n, \floor{n\B a^*})>0.
\end{equation}
Note finally that $\sum_{r\le s}a^*_{rs}>1$, which means that the optimal connected graph is strictly larger than a tree.   Next lemma is a continuity result, that uses the large deviation result for the matrix of edges in the connected graph to say that the probability of the graph to be connected is exponentially equivalent to the probability that the graph is connected with a given edge matrix.

\begin{Lemma} \label{th: typical_size_multid}
  Let $\B{\mc{E}}^{(n)}$ be the random edge configuration in the inhomogeneous random graph $\irg{n}{\kappa}$ with $\B n/n\xrightarrow{} \B \nu$ and $\B \kappa_n \xrightarrow{} \B \kappa$ and $\B{a}^*$ as defined in \eqref{eq: typical_edges}. Then,
    \[
    \lim_{n\to \infty} \ld {q_n(\B n, \B \kappa_n) } = \lim_{n\to \infty} \ld{ q_n(\B n, \B \kappa_n, \floor{n\B a^*})}.
    \]
\end{Lemma}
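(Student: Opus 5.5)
The upper bound is immediate: $q_n(\B n,\B\kappa_n,\floor{n\B a^*})\le q_n(\B n,\B\kappa_n)$, so $\limsup_n \ld{q_n(\B n,\B\kappa_n,\floor{n\B a^*})}\le \langle \B\nu,\log(1-\e^{-\B\kappa\B\nu})\rangle$ by Theorem~\ref{th: limit_connectivity}. The whole work is the matching lower bound. We need to show that conditioning on connectedness, the probability that the edge matrix equals \emph{exactly} $\floor{n\B a^*}$ is not exponentially small. We will obtain this from Proposition~\ref{prop: LDP} combined with a local (point-mass) comparison argument.

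\textbf{Step 1 (concentration on a box).} Since $\mc J_{\B\nu,\B\kappa}$ has the unique zero $\B a^*$ and is a good rate function (it is the Legendre transform of a finite convex function on all of $\symm$), the LDP upper bound on the closed complement of an open box $B_\delta$ around $\B a^*$ gives
\[
\mathbb P_{\B n,\B\kappa_n}\bigl(\B{\mc E}^{(n)}/n\in B_\delta\mid \text{connected}\bigr)\to 1 .
\]
The box $B_\delta$ contains at most $(2\delta n+1)^{d^2}$ lattice points. Hence some $\B\eps_n$ with $\B\eps_n/n\in B_\delta$ satisfies $q_n(\B n,\B\kappa_n,\B\eps_n)\ge q_n(\B n,\B\kappa_n)\,n^{-d^2}/2$ for large $n$.

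\textbf{Step 2 (moving from $\B\eps_n$ to $\floor{n\B a^*}$).} This is the main obstacle, because the LDP gives no control on single atoms. I would write $q_n(\B n,\B\kappa_n,\B\eps)=C(\B n,\B\eps)\prod_{r\le s}p_{rs}^{\eps_{rs}}(1-p_{rs})^{J_{rs}-\eps_{rs}}$. The product factor changes by at most $\e^{O(\delta n)}$ when $\B\eps$ moves within $B_{\delta}$. So it suffices to compare $C(\B n,\B\eps_n)$ with $C(\B n,\floor{n\B a^*})$ up to a factor $\e^{-c(\delta)n}$ with $c(\delta)\to0$.

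For entries where $\eps_{rs}$ must increase, we add edges of type $\{r,s\}$ to a connected graph. This preserves connectedness and maps each graph to at least one graph, while each target graph has at most $\binom{\eps'_{rs}}{k}\le \e^{O(\delta\log(1/\delta) n)}$ preimages. For entries where $\eps_{rs}$ must decrease, we delete non-bridge edges of type $\{r,s\}$. A typical connected graph near $\B a^*$ has linearly many such edges: we have $\sum a^*_{rs}>1$, and the irreducible structure of $\B w$ from \eqref{eq: existence_optimal_graph} gives excess of order $n$ in each type pair with $a^*_{rs}>0$. If a direct deletion argument is awkward, I would instead argue with a cycle-rich subset: restrict to graphs in which each type pair has at least $\eta n$ edges lying on cycles. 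Such graphs still carry most of the conditional mass, because the complement would force a different edge statistics, which the LDP penalizes. Both operations are injective up to a factor $\e^{O(\delta\log(1/\delta)n)}$, which yields
\[
C(\B n,\floor{n\B a^*})\ge C(\B n,\B\eps_n)\,\e^{-c(\delta)n}.
\]

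\textbf{Step 3 (conclusion).} Combining the two steps gives $\liminf_n\ld{q_n(\B n,\B\kappa_n,\floor{n\B a^*})}\ge \lim_n\ld{q_n(\B n,\B\kappa_n)}-c'(\delta)$. Letting $\delta\downarrow0$ finishes the proof.

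An alternative route for Step 2, which I would try if the surgery bounds become messy, is a tilting trick. Using the kernel $\tilde{\B\kappa}_n(\B t)$ with small $\B t$, one shifts the conditional mean to any point near $\B a^*$, as in the proof of Proposition~\ref{prop: LDP}. Combined with log-concavity-type monotonicity in each coordinate, this would give that atoms are comparable across the box at subexponential cost.
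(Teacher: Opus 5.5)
Your overall plan matches the paper's: the trivial upper bound, then LDP localisation near $\B a^*$, then a combinatorial comparison of atoms. Step 2, however, has a genuine gap, and it starts from a false estimate.

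\textbf{The weight estimate is wrong.} The weight $\prod_{r\le s}p_{rs}^{\eps_{rs}}(1-p_{rs})^{J_{rs}-\eps_{rs}}$ does not vary by only $\e^{O(\delta n)}$ over $B_\delta$. Since $p_{rs}/(1-p_{rs})\asymp 1/n$, moving $\eps_{rs}$ by $k\asymp\delta n$ changes it by $n^{\mp k}\e^{O(k)}$, i.e.\ by $\e^{\mp\delta n\log n(1+o(1))}$. Correspondingly, $C(\B n,\cdot)$ changes by roughly $(J_{rs}/\eps_{rs})^{\pm k}\asymp n^{\pm k}$, and these two effects must cancel up to $\e^{O(\delta n)}$. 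Comparing the $C$'s up to $\e^{-c(\delta)n}$ is therefore the wrong target.

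\textbf{Addition direction.} Your bound (at least one image per graph, at most $\binom{\eps'_{rs}}{k}$ preimages) discards exactly the factor $\approx (J_{rs}/\eps_{rs})^k$ that is needed. You must count all ways of adding an edge, i.e.\ double count pairs (graph, distinguished edge), to get $\eps_{rs}C(\B n,\B\eps)\ge (J_{rs}-\eps_{rs}+1)\,C(\B n,\B\eps-\B e_{rs})$.

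\textbf{Deletion direction.} The same inequality gives $C(\B n,\B\eps-\B h)\le C(\B n,\B\eps)\prod_{r\le s}\bigl(\eps_{rs}/(J_{rs}-\eps_{rs})\bigr)^{h_{rs}}$. So no comparison within $\e^{-c(\delta)n}$ can hold when you delete order $\delta n$ edges. A matching lower bound would require that, on average, a positive fraction of the $\{r,s\}$-edges are non-bridges. Proposition~\ref{prop: LDP} controls only edge counts and says nothing about bridges. Hence the claim that cycle-poor graphs ``force a different edge statistics'' is unsupported. The tilting alternative only reweights atoms by explicit factors $\e^{\langle \B t,\B\eps\rangle}$, so it does not by itself compare neighbouring atoms.

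\textbf{The missing idea: a one-sided box.} Avoid deletions altogether. The paper takes $\mc R_\delta=\{\B B\colon a^*_{rs}-2\delta\le B_{rs}\le a^*_{rs}\ \text{for } \kappa_{rs}>0\}$, which has $\B a^*$ as a corner. This box no longer has conditional probability tending to $1$. But its interior accumulates at the zero $\B a^*$ of $\mc J_{\B\nu,\B\kappa}$, and the convex rate function is continuous there. So the LDP lower bound still gives it conditional mass $\e^{o(n)}$, which is all that is needed.

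Every $\B\eps$ with $\B\eps/n\in\mc R_\delta$ satisfies $\B\eps\preceq\B s_n:=\floor{n\B a^*}$, so only additions are required. Iterating the double-counting inequality gives
\[
\frac{q_n(\B n,\B\kappa_n,\B\eps)}{q_n(\B n,\B\kappa_n,\B s_n)}\le\prod_{r\le s}\Bigl(\frac{s_{rs}(1-p_{rs})}{p_{rs}(J_{rs}-s_{rs})}\Bigr)^{s_{rs}-\eps_{rs}}\le \e^{O(\delta n)},
\]
because each base is $O(1)$: it is asymptotically $2^{\delta_{rs}}a^*_{rs}/(\kappa_{rs}\nu_r\nu_s)$. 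Summing over the polynomially many lattice points of $\mc R_\delta$ (or pigeonholing, as in your Step 1) and letting $\delta\downarrow 0$ gives the missing lower bound.
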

\begin{proof} Let us consider the subset $\mc{R}_{\delta}=\{\B B=(B_{rs})\in \symm \colon \sup_{r,s: \kappa_{rs}>0}|a_{rs}^*-\delta-B_{rs}|\leq \delta\}$.  As a consequence of Proposition~\ref{prop: LDP}, we have that
\begin{align*}
   \lim_{n\to \infty} \ld {q_n(\B n, \B \kappa_n) } = \lim_{n\to \infty} \ld {\hat{q}_n\left(\mc{B}_{\delta}\right)},
\end{align*}
where $\hat{q}_n \left(\mc{R}_{\delta}\right)  = \prod_{r\le s}(1-p_{rs})^{J_{rs}}\left[\sum_{\B \eps/n \in \mc{R}_\delta} C(\B k, \B{\eps})\prod_{r\le s}\left(\frac{p_{rs}}{1-p_{rs}}\right)^{\eps_{rs}}\right].$ 
We can write
\begin{align*}
\hat{q}_n \left(\mc{R}_{\delta}\right) 
&= q_n(\B n, \B s_n) \left[\sum_{\B \eps/n \in \mc{R}_\delta} \frac{C(\B n, \B \eps)}{C(\B n, \B{s}_n)}\prod_{r\le s}\left(\frac{p_{rs}}{1-p_{rs}}\right)^{\eps_{rs}-s_{rs}}\right],
\end{align*}
where ${\B s}_n = (\floor{n\B a^*})_{r\le s}$ is such that $\B \eps \preceq\B s_n$, for all $\B \eps/n \in \mc{R}_\delta$ and $C(\B n, \B s_n)>0$ by \eqref{eq: existence_optimal_graph}. To bound such quantity, we must first inspect the combinatorial term $C(\B n,\B \eps)$. Fix the pair $(r,s)$ and consider a connected multitype graph with $\eps_{rs} - 1$ edges, that is a connected graph with edge matrix $\B \eps - \B e_{rs}$, with $\B e_{rs}$ element of the canonical basis of $\symm$. Choose any of its $J_{rs}- (\eps_{rs}-1)$ absent edges and distinguish it, obtaining $(J_{rs}+1-\eps_{rs})C(\B n,\B \eps-\B e_{rs})$ different (distinguished) connected graphs. Whichever of these can be obtained by selecting a proper connected graph with $\B \eps$ and distinguishing one of its $\eps_{rs}$ edges. Thus, $\eps_{rs} C(\B n, \B \eps) \ge (J_{rs} - (\eps_{rs}-1))C(\B n, \B \eps - \B e_{rs})$. Consequently
\[
   C(\B n, \B \eps) \ge  C(\B n, \B \eps-\B h) \prod_{r\le s}\left(\frac{J_{rs}-\eps_{rs}}{\eps_{rs}}\right)^{h_{rs}}
\]
 for any $\B 0 \preceq \B h \preceq \B \eps$. It follows that, for proper constants $1<C_{rs}<\infty$, being $\frac{s_n(r,s)}{{p_{rs}(J_{rs}-s_n(r,s))}} =O(1)$,
\begin{align*}
\frac{\hat{q}_n (\mc{R}_{\delta})}{q_n(\B n, \B{s}_n)} \le \sum_{\B \eps/n \in \mc{R}_\delta} \prod_{r\le s}\left(\frac{s_n(r,s)}{p_{rs}(J_{rs}-s_n(r,s))}\right)^{\eps_{rs}-s_n(r,s)} &\le \sum_{\B \eps/n \in \mc{R}_\delta} \prod_{r\le s} C_{rs}^{\eps_{rs}-s_n(r,s)} \le (2\delta n)^{d(d+1)/2}\e^{n2\delta c},
\end{align*}
for some finite constant $c<\infty$.
On the other hand $\hat{q}_n(\mc{R}_\delta) \ge q_n(\B n, \B s_n)$, thus 
$0 \le \lim_{n\to \infty} \ld{\frac{\hat{q}_n(\mc{R}_{\delta})}{q_n(\B n, \B{s}_n)}} \le 2\delta c$
and the claim follows by taking the limit $\delta \to 0$.
\end{proof}

Finally we need to ensure the existence of solutions of the connected fixed point problem. This is done, under sufficient conditions, in the following lemma.

\begin{Lemma} \label{lemma: invertibility}
    Let $\B w$ be a nonnegative irreducible matrix and $\B \nu$ a positive probability vector. The system of equations
    \[
    \sum_{s=1}^d(w_{rs}+w_{sr})\frac{2y_{r}y_{s}}{y_{r}+y_{s}} =  \nu_r\log\left(\frac{1+y_{r}}{1-y_{r}}\right)
    \]
    admits a solution $\B y\in (0,1)^d$ for any 
    $\B w$ such that $\B w\B 1\succ \B \nu$.
\end{Lemma}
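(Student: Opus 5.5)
The plan is to recast the system as a fixed-point problem for a monotone map on $[0,1]^d$ and to trap a fixed point between an upper and a lower barrier. Write $b_{rs}=w_{rs}+w_{sr}$, which is symmetric, nonnegative and irreducible, and write $H(x,y)=\frac{2xy}{x+y}$ for the harmonic mean, with $H(0,0)=0$. Since $\log\frac{1+y}{1-y}=2\,\mathrm{artanh}(y)$, the system is equivalent to $\B y=T(\B y)$, where
\[
T_r(\B y)=\tanh\Big(\frac{1}{2\nu_r}\sum_{s} b_{rs}H(y_r,y_s)\Big),\qquad r\in\mc S .
\]
The map $T$ is continuous on $[0,1]^d$, because $H$ is continuous there. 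It is also monotone for the coordinatewise order $\preceq$, since $H$ is nondecreasing in each argument and $\tanh$ is increasing.

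\emph{Upper barrier.} Since $H(1,1)=1$, we have $T_r(\B 1)=\tanh\big(((\B w\B 1)_r+(\B w^{\top}\B 1)_r)/(2\nu_r)\big)<1$. Hence $T(\B 1)\preceq \B 1$, and monotonicity gives $T([\B 0,\B 1])\subseteq[\B 0,\B 1-\eta\B 1]$ for some $\eta>0$. So every fixed point automatically lies below $1$.

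\emph{Lower barrier.} The main step is to find $\B u\succ\B 0$ and $\eps>0$ with $T(\eps\B u)\succeq \eps\B u$. Then $T$ maps the box $[\eps\B u,\B 1]$ into itself, and either Brouwer's theorem or the monotone iteration $\B y^{k+1}=T(\B y^k)$ started from $\B 1$ yields a fixed point $\B y\in[\eps\B u,(1-\eta)\B 1]\subset(0,1)^d$. Because $H$ is $1$-homogeneous and $\tanh(x)=x+O(x^3)$, it suffices to solve the linearized strict inequality
\[
L_r(\B u):=\sum_s b_{rs}H(u_r,u_s)> 2\nu_r u_r\qquad\text{for all } r .
\]
Indeed, the gap absorbs the cubic error for small $\eps$: $\tanh\big(\eps L_r(\B u)/(2\nu_r)\big)\geq \eps u_r$ once $\eps$ is small.

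The operator $\B u\mapsto L(\B u)$ is monotone, concave and $1$-homogeneous, and irreducibility of $\B b$ makes it irreducible on the positive cone. So I would invoke nonlinear Perron--Frobenius theory (Krein--Rutman/Nussbaum type for concave homogeneous maps) to obtain $\lambda>0$ and $\B u\succ\B 0$ with $L(\B u)=\lambda\,\mathrm{diag}(2\B\nu)\B u$. The strict linearized inequality then holds if and only if $\lambda>1$. The obstacle I expect is exactly proving $\lambda>1$ from the hypothesis $\B w\B 1\succ\B\nu$.

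The one-type case is clear: there $L(u)=2wu$, so $\lambda=w/\nu>1$. Summing the eigen-relation over $r$ weighted by constant test vectors only gives $\sum_r(\B w\B 1)_r>\sum_r\nu_r$ in aggregate, which is not enough. Looking at the coordinate $r$ where $u_r$ is minimal, one has $H(u_r,u_s)\ge u_r$ for all $s$, hence $2\lambda\nu_r\ge (\B w\B 1)_r+(\B w^{\top}\B 1)_r$; this gives one good coordinate but not all of them. My first attempt would be a Collatz--Wielandt argument using the inequality $H(u_r,u_s)\ge \frac{u_s}{u_r+u_s}\,2u_r$ and splitting each $b_{rs}$ into its $w_{rs}$ and $w_{sr}$ parts. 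The aim is to pair the term $w_{rs}$ at $r$ with the term $w_{sr}$ at $s$, so that each row only needs $\sum_s w_{rs}\geq \nu_r$ after a suitable choice of test vector. If this fails, the fallback is a continuation argument: homotope from $\B w_0=c\,\B 1\B 1^{\top}$ with $c$ large, where $\B u=\B 1$ works trivially, to the given $\B w$ along $\B w_\theta$ with $\B w_\theta\B 1\succ\B\nu$ throughout. Then I would show that the positive fixed point cannot reach the boundary of $(0,1)^d$ along the path. If $y_r\to0$ for $r$ in some set $A$, dividing the $r$-equations by $y_r$ and summing over $A$ should contradict $\B w\B 1\succ\B\nu$ together with irreducibility.
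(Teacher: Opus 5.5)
Your reduction is sound, and it is essentially the paper's Step~2 in different form. $T$ is continuous and monotone, the upper barrier holds, and once you have $\B u\succ\B 0$ with $L_r(\B u)>2\nu_r u_r$ for all $r$, the box $[\eps\B u,\B 1]$ is $T$-invariant, so Brouwer or monotone iteration gives a solution in $(0,1)^d$. The paper runs the same barrier argument via Poincar\'e--Miranda on $\prod_r[x_r\eps,1-\eps]$.

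\textbf{The gap.} You never produce $\B u$, and the routes you propose for it do not work.

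\emph{The Perron--Frobenius route.} A positive eigenvector need not exist. Take $\B w=\begin{pmatrix}2&1/4\\3/4&0\end{pmatrix}$ and $\B\nu=(1/2,1/2)$. These satisfy all hypotheses: $\B w$ is irreducible and $\B w\B 1=(9/4,3/4)\succ\B\nu$. Then $\operatorname{diag}(2\B\nu)=I$, $L_1(\B u)=4u_1+H(u_1,u_2)$ and $L_2(\B u)=H(u_1,u_2)$. Hence $L_1(\B u)/u_1>4>2>L_2(\B u)/u_2$ for every $\B u\succ\B 0$, and $L(\B u)=\lambda\B u$ has no positive solution. Yet $\B u=(2,1)$ gives $L(\B u)=(28/3,4/3)\succ\B u$. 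The reason is $H(u_r,u_s)\le 2u_r$: $L_r$ stays below $2(\B b\B 1)_r u_r$ however large the other coordinates are. So the irreducibility conditions under which nonlinear Perron--Frobenius theory yields a positive eigenvector fail, even though $\B b$ is irreducible.

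\emph{The Collatz--Wielandt matching.} Matching pair by pair would force $\frac{u_s}{u_r+u_s}=\frac{w_{rs}}{w_{rs}+w_{sr}}$, i.e.\ $u_s/u_r=w_{rs}/w_{sr}$ for every pair. That is a cycle condition, generically false once $d\ge 3$.

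\emph{The continuation fallback.} It presupposes a continuous branch of positive fixed points along $\B w_\theta$. Your boundary estimate is actually correct: dividing by $y_r$ and summing over $A$ gives $\sum_{r\in A}(\B w\B 1)_r\le\sum_{r\in A}\nu_r$ in the limit. But without a Leray--Schauder degree or index computation at $\theta=0$, it does not give existence at $\theta=1$.

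\textbf{The missing idea} is the paper's Step~1. For every $\B u\succ\B 0$,
\[
\sum_r\sum_s b_{rs}\frac{u_s}{u_r+u_s}=\frac12\sum_{r,s}b_{rs}=\sum_r(\B w\B 1)_r .
\]
So asking for $\sum_s b_{rs}\frac{u_s}{u_r+u_s}\ge(\B w\B 1)_r$ for each $r$ is the same as asking for equality. These are exactly the Zermelo/Bradley--Terry likelihood equations.

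The paper solves them by maximizing $F_{\B w}(\B\gamma)=\prod_{r<s}\big(\frac{\gamma_r}{\gamma_r+\gamma_s}\big)^{w_{rs}}\big(\frac{\gamma_s}{\gamma_r+\gamma_s}\big)^{w_{sr}}$ over the open simplex. At any boundary point, irreducibility of $\B w$ gives indices $k,l$ with $\gamma_k=0<\gamma_l$ and $w_{kl}>0$. So $F_{\B w}\to 0$ at the boundary and the maximizer $\B\gamma^*$ is interior. Summing the Lagrange conditions shows the multiplier vanishes, and $\B x=1/\B\gamma^*$ satisfies $\sum_s b_{rs}\frac{x_s}{x_r+x_s}=(\B w\B 1)_r$. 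Hence $L_r(\B x)=2x_r(\B w\B 1)_r>2\nu_r x_r$, which is exactly the lower barrier your argument needs. With $\B u=\B x$, the rest of your proof goes through.
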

\begin{proof} 
The proof of this lemma is divided into two steps.

\emph{{\bf Step 1.} There is a positive vector $\B x$ such that $\sum_s (w_{rs}+w_{sr})\frac{x_s}{x_r+x_s}=\sum_s w_{rs}$ for all $r\in \mc{S}$.}

We borrow the proof of this from the analysis of the well-known Bradley–Terry model (see \textit{e.g.} \cite{zermelo1929} or \cite{ford_solution_1957}), but we include it here in our notation for completeness.
Let $S=\{\B \gamma\in (0,\infty)^d:\sum_{r=1}^d \gamma_{r}=1\}$ and consider the function $F_{\B w}: S \to(0,1]$,
\[
    F_{\B w}(\B \gamma) = \prod_{r< s} \left(\frac{\gamma_r}{\gamma_r+\gamma_s}\right)^{w_{rs}} \left(\frac{\gamma_s}{\gamma_r+\gamma_s}\right)^{w_{sr}}.
\] 
We prove that $F_{\B w}$ attains a maximum $\B \gamma^*\in S$. First, by standard optimization, it follows that, if it exists, the maximum $\B \gamma^*\in S$ should satisfy, for all $r\in \mc{S}$, the following:
\[
    \sum_{s=1}^d w_{rs} - \sum_{s=1}^d (w_{rs}+w_{sr})\frac{\gamma^*_r} {\gamma^*_r+\gamma^*_s} = \lambda \gamma^*_r,
\]
for a multiplier $\lambda $. By summing over $r$ we see that necessarily $\lambda=0$; the claim follows by taking $x_r = 1/\gamma^*_r$. It only remains to prove that such a $\B \gamma^*\in S$ exists. Take $\overline{S}$ -- the closure of $S$ -- and show that $F_{\B w}$ can be extended continuously to $0$ at the boundary. Fix $\smash{\B\gamma^{(0)}\in \partial S}$. There must be indices $r\ne s$ such that $\smash{\gamma^{(0)}_r=0}$ and $\smash{\gamma_s^{(0)}>0}$. Then by the irreducibility of $\B w$ there is a sequence of mutually distinct indices $\smash{r=i_0,i_i,\dots, i_{m-1}, i_m=s}$ such that $\smash{w_{i_ji_{j+1}}>0}$ for any $j=0,\dots, m$, therefore there must be indices $k,l$ such that $\smash{\gamma^{(0)}_k=0}$, $\smash{\gamma^{(0)}_l>0}$, and $w_{kl}>0$. Now let $\B \gamma\in S$ and extract the $kl$ term: we may write $F_{\B w} (\B \gamma) = \left(\frac{\gamma_k}{\gamma_k+ \gamma_l} \right)^{w_{kl}} \phi_{\B w}(\B \gamma)$,
with $\phi_{\B w}(\B \gamma)\in (0,1]$. It follows $\lim_{\B \gamma \to \B \gamma ^{(0)}}F_{\B w} (\B \gamma)=0$, hence $\B \gamma^*$ exists and it is necessarily in $S$, not at the boundary.

\emph{\textbf{Step 2.}  Consider the vector field 
    \[
    V_r(\B y) = \sum_{s=1}^d (w_{rs}+w_{sr}) \frac{2y_{r}y_{s}}{y_{r}+y_{s}} - \nu_r\log\left(\frac{1+y_{r}}{1-y_{r}}\right) \qquad {r=1,2,\dots, d}.
    \]
    Then there is $\B y \in (0,1)^d$ such that $\B V(\B y)= \B0$.}

     We use the point $\B x$ from \emph{Step 1} to slighlty adapt the the proof of Poincaré–Miranda theorem for the existence of a fixed point. Fix a small $\eps>0$, the positive vector $\B x$ from \emph{Step 1} and consider the hypercube $H= [x_1\eps, 1-\eps]\times [x_2\eps, 1-\eps] \times \dots \times [x_d\eps, 1-\eps]$. Define for all $r$ the opposite pairs of the hypercube:
    \[
    H^+_r = \{\B y\in H: y_r=x_r\eps\} \qquad H^-_r = \{\B y\in H: y_r=1-\eps\} .
    \]
    By the Poincaré–Miranda theorem, if $V_r(H^+_r)>0$ and $V_r(H^-_r)<0$ for all $r$, then there exists a point $\B y^*$ in the interior of the hypercube such that $\B V(\B y^*) = \B 0$. Fix $r\in \mc{S}$.
 
  First, let $\B y \in H^-_r$, then we have $V_r(\B y) \le 2\sum_{s=1}^d(w_{rs}+w_{sr}) - \nu_r\log\left(\frac{2-\eps}{\eps}\right) < 0$ for sufficiently small $\eps$.
            
      Now, let $\B y \in H_r^+$. The minimum of $V_r$ on $H^+_r$ is achieved at $\B y = \eps \B x$, therefore, by the previous step,
            \begin{align*}
            {V_r(\eps\B x)} &\ge x_r\eps\left(\sum_{s=1}^d (w_{rs}+w_{sr})\frac{x_s}{x_r+x_s} - \frac{\nu_r}{2x_r\eps} \log\left(\frac{1+x_r\eps}{1-x_r\eps}\right)\right) =x_r\eps\left(\sum_{s=1}^d w_{rs} - {\nu_r}(1+o(\eps))\right)
            \end{align*}
            and the last term is positive for sufficiently small $\eps$. The thesis follows from Poincaré–Miranda theorem.
     
\end{proof}

\subsection{Proof of the theorem}
Now we are ready to prove the main theorem. We exploit the fact that, since the edge matrix concentrates exponentially fast around the optimal one in a connected inhomogeneous random graph, it is enough to tune the connection kernel to make any precise the limiting connection matrix the typical one for that graph. Hence, we choose a limit kernel $\B \kappa$ such that $\B a$ is its optimal (rescaled) edge matrix.

\begin{proof}
 By Lemma~\ref{lemma: invertibility} the set of equations \eqref{eq: y_rs_equation} admits a solution $\B y\in (0,1)^d$. We choose the following $\B \kappa$: 
\[
\kappa_{rs}= \frac{2^{\delta_{rs}}a_{rs}}{\nu_r\nu_s}\frac{2y_ry_s}{y_r+y_s} \qquad r,s \in \mc{S},
\]
which is symmetric and irreducible by construction. Then it is easy to see that the set of equations \eqref{eq: y_rs_equation} is equivalent to $
    y_r^{-1} = \coth(\frac{1}{\nu_r}\sum_{s}2^{\delta_{rs}}a_{rs}\frac{y_ry_s}{y_r+y_s}),
$
that is $ y_r^{-1} =\coth((\B \kappa \B \nu)_r/2),
$
which implies 
\[
    2^{\delta_{rs}}a_{rs}  = \frac{\nu_r\kappa_{rs}\nu_s}{2} \left(\coth(\frac{1}{2} ( \B\kappa \nu )_r)+\coth(\frac{1}{2} (\B \kappa \nu )_s) \right).
\]
Now, consider the inhomogeneous random graph $\Gcal(\B n, \B \kappa)$ with limiting connection kernel $\B \kappa$ as above. From Theorem~\ref{th: limit_connectivity}, we know that the normalized logarithm of the probability of connectedness  reads as
\[
\sum_{r} \nu_r \log(1-e^{-(\B \kappa \B \nu)_r}) = \sum_{r} \nu_r \log\frac{2y_r}{1+y_r}.
\]
On the other hand, by Lemma~\ref{th: typical_size_multid}
\[
\sum_{r} \nu_r \log(1-e^{-(\B \kappa \B \nu)_r}) = \lim_n \ld{\left[C(\B n, \B \eps)\prod_{r\le s}p_{rs}^{\eps_{rs}}(1-p_{rs})^{J_{rs}-\eps_{rs}}\right]}.
\]
We write $C(\B n, \B \eps)=\prod_{r\le s}\binom{I_{rs}}{\eps_{rs}}e^{n\Phi{(\B n, \B \eps)}}$. To proceed, consider the asymptotics for the term $p_{rs}^{\eps_{rs}}(1-p_{rs})^{J_{rs}-\eps_{rs}}$:
    \begin{align*}
        \frac{1}{n}  \log\left[\left(\frac{p_{rs}}{1-p_{rs}}\right)^{\eps_{rs}}(1-p_{rs})^{J_{rs}}\right] &=  a_{rs}(\log\kappa_{rs}-\log n) -\frac{\nu_r\kappa_{rs}\nu_s}{2^{\delta_{rs}}}+ O(n^{-1}) \\
        & =  a_{rs}(\log\kappa_{rs}-\log n) -a_{rs}\frac{2y_ry_s}{y_r+y_s}+ O(n^{-1}).
    \end{align*}
Finally, expanding  the binomial terms with Stirling's formula, we get that, for all $r,s\in \mc{S}$:
\begin{align*}
    \ld{\binom{J_{rs}}{\eps_{rs}}} &= \ld{\left(\frac{J_{rs}}{\eps_{rs}}\right)^{\eps_{rs}}} + \ld{\left(1-\frac{\eps_{rs}}{J_{rs}}\right)^{\eps_{rs}-I_{rs}}} + \Obig{\frac{\log n}{n}}\\
    &= a_{rs}\left(\log\frac{\nu_r\nu_s}{2^{\delta_{rs}}a_{rs}}+\log n\right)  + a_{rs}\left(\frac{1}{J_{rs}}(J_{rs}-\eps_{rs})\right) +\Obig{\frac{\log n}{n}} \\
    &= a_{rs}\left(\log\frac{2y_ry_s}{\kappa_{rs}(y_r+y_s)}+\log n\right)  + a_{rs} +\Obig{\frac{\log n}{n}}.
\end{align*}
It follows that $\Phi(\B n, \B \eps) = \varphi(\B \nu, \B a) + \Obig{\frac{\log n}{n}}$, with
\[
\varphi(\B n, \B \eps) =  \sum_{r} \nu_r \log\frac{2y_r}{1+y_r} -\sum_{r\le s} a_{rs}\left[1+\log\left(\frac{2y_ry_s}{y_r+y_s}\right) -\frac{2y_ry_s}{y_r+y_s}\right].
\]
\end{proof}
\appendix
\section{Appendix} 
\subsection{Large deviations}
Let $\mathcal{X}$ be a Polish space with distance $d: \mathcal{X} \times \mathcal{X} \mapsto [0, \infty)$.
\begin{definition} [Lower semi-continuous function]
$I: \mathcal{X} \mapsto [-\infty, \infty]$ is lower semi-continuous if it satisfies any of the following equivalent conditions: \begin{itemize} [topsep=-5pt, noitemsep]
    \item $\liminf_{n\to \infty} I(x_n) \ge I(x)$ for any sequence $(x_n)$ such that $\lim_{n\to \infty}x_n = x \in \mathcal{X}$;
    \item given a ball $B_{\epsilon}(x)$ of radius $\epsilon>0$ centered at $x\in \mathcal{X}$, $\lim_{\epsilon \downarrow 0} \inf_{y \in B_{\epsilon}(x)} I(y) = I(x)$;
    \item $I$ has closed level sets, that is $I^{-1}([-\infty,c]) = \{x\in \mathcal{X}: I(x)\le c\}$ is closed $\forall c \in \R$.
\end{itemize}
\end{definition}
\begin{Lemma}
    A lower semi-continuous function attains a minimum on every non-empty compact set.
\end{Lemma}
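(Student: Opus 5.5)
The statement is the classical fact that a lower semi-continuous $I$ attains its infimum on a non-empty compact $K\subseteq\mathcal{X}$. I will argue by a minimizing sequence, using the sequential characterization of lower semi-continuity from the definition above. Since $\mathcal{X}$ is a metric (Polish) space, compactness of $K$ is equivalent to sequential compactness, which is the form I need.

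Set $m:=\inf_{x\in K} I(x)\in[-\infty,\infty]$. This infimum is well defined because $K\neq\emptyset$. If $m=+\infty$, then $I\equiv+\infty$ on $K$ and every point of $K$ is a minimizer, so assume $m<+\infty$. Choose a sequence $(x_n)\subseteq K$ with $I(x_n)\to m$. Concretely, take $I(x_n)<m+1/n$ if $m$ is finite, and $I(x_n)<-n$ if $m=-\infty$.

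By sequential compactness, a subsequence $x_{n_k}$ converges to some $x^*\in K$, and $I(x_{n_k})\to m$ along it. Applying the first condition of the definition of lower semi-continuity gives
\[
I(x^*)\le\liminf_{k\to\infty} I(x_{n_k}) = m .
\]
Since $x^*\in K$, we also have $I(x^*)\ge m$. Hence $I(x^*)=m$, and the minimum is attained at $x^*$.

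A further consequence follows when $I$ takes values in $(-\infty,\infty]$, as rate functions do. Then $I(x^*)>-\infty$, so the case $m=-\infty$ cannot occur, and the minimum is finite. The only step needing care is the passage from compactness to sequential compactness, and that is supplied by metrizability. Alternatively, one can avoid sequences entirely: if the infimum were not attained, the open sets $\{I>c\}$ for $c>m$ (open because the level sets are closed) would cover $K$. A finite subcover then yields $I>c_0$ on $K$ for some $c_0>m$, contradicting the definition of $m$.
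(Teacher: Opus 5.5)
The paper gives no proof of this lemma; it is quoted in the appendix as standard background for the large-deviation material, so there is no argument of the authors' to compare with. Your proof is correct and complete, including the degenerate cases:
\begin{itemize}
\item $m=+\infty$ is trivial;
\item $m=-\infty$ is handled by the choice $I(x_n)<-n$, and it is excluded for rate functions, which take values in $[0,\infty]$;
\item sequential compactness of $K$ (legitimate because $\mathcal{X}$ is metric), combined with the sequential form of lower semi-continuity, gives $m\le I(x^*)\le\liminf_k I(x_{n_k})=m$.
\end{itemize}

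Of your two routes, the open-cover argument is the more general. It uses only the closed-level-set characterization and the finite-subcover definition of compactness, so it works in an arbitrary topological space. The minimizing-sequence argument instead needs $K$ to be sequentially compact, which here is supplied by metrizability. In the cover argument, make two points explicit. First, each threshold $c$ can be taken real with $m<c<I(x)$; this is possible because non-attainment forces $m<I(x)$ for every $x\in K$, and in particular $m<+\infty$. Second, $\bigcup_{i}\{I>c_i\}=\{I>\min_i c_i\}$, which is what produces the contradiction $m\ge\min_i c_i>m$.
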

\begin{definition}
    The function $I: \mathcal{X} \mapsto [0, \infty]$ is called a (good\footnote{There is a weak version of the LDP which requires $I$ to be only lower semi-continuous. To distinguish the two versions, we speak of rate functions and good rate functions.}) rate function if $I\not \equiv \infty$ and it has (compact) closed level sets.
\end{definition}
We can now define what is meant by a large deviation principle.
\begin{definition} [LDP] \label{def: LDP}
    Let $(\mu_n)_{n\in \mathbb{N}}$ be a sequence of probability measures on $\mathcal{X}$ and $I$ a good rate function on $\mathcal{X}$. The sequence is said to satisfy the large deviation principle (LDP) with rate $n$ and rate function $I$ if 
    \[
    \begin{split}
        \limsup_{n\to \infty} \ld{\mu_n}(C) &\le - \inf_{x\in C} I(x) \ \forall C \subset \mathcal{X}\qq{closed,}\\
        \liminf_{n\to \infty} \ld{\mu_n}(O) &\ge -\inf_{x\in O} I(x) \ \forall O \subset \mathcal{X} \qq{open.}
    \end{split}
    \]
    If a set $S\subset \mc{X}$ is $I$-continuous, that is if $\inf_{x\in \text{int}(S)}I(x)=\inf_{x\in \text{cl}(S)}I(x)$, then the LDP implies that
    \begin{equation} \label{eq: LDP_semplice}
        \lim_{n\to \infty} \ld{\mu_n}(S) = - \inf_{x \in S}I(x).
    \end{equation}
\end{definition}
\begin{theorem}
    Let $(\mu_n)_{n\in \mathbb{N}}$ satisfy the LDP. Then the associated rate function $I$ is unique.
\end{theorem}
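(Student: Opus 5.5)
The statement is the uniqueness of the rate function in an LDP on a Polish space $\mathcal{X}$. Suppose $(\mu_n)$ satisfies the LDP of Definition~\ref{def: LDP} with two rate functions $I$ and $I'$. We will show that both coincide with the local quantity
\[
\bar I(x) := -\lim_{\epsilon\downarrow 0}\limsup_{n\to\infty}\ld{\mu_n(B_\epsilon(x))} = -\lim_{\epsilon\downarrow 0}\liminf_{n\to\infty}\ld{\mu_n(B_\epsilon(x))},
\]
which depends only on the sequence $(\mu_n)$. This forces $I=I'$.

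Fix $x\in\mathcal{X}$ and $\epsilon>0$. Write $\overline{B}_\epsilon(x)$ for the closed ball. Apply the lower bound to the open ball $B_\epsilon(x)$ and the upper bound to the closed ball $\overline{B}_\epsilon(x)$, which contains it. This gives the chain
\[
-\inf_{y\in \overline{B}_\epsilon(x)} I(y)\ \ge\ \limsup_n \ld{\mu_n(\overline B_\epsilon(x))}\ \ge\ \limsup_n \ld{\mu_n(B_\epsilon(x))}\ \ge\ \liminf_n \ld{\mu_n(B_\epsilon(x))}\ \ge\ -\inf_{y\in B_\epsilon(x)} I(y)\ \ge\ -I(x).
\]
Now let $\epsilon\downarrow 0$. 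All the quantities in the chain are monotone in $\epsilon$, so the limits exist in $[-\infty,0]$.

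By lower semicontinuity of $I$, in the ball formulation of the definition, $\inf_{y\in B_\epsilon(x)}I(y)\to I(x)$. The closed ball $\overline B_\epsilon(x)$ is contained in $B_{2\epsilon}(x)$. Hence
\[
\inf_{\overline B_\epsilon(x)} I\ \ge\ \inf_{B_{2\epsilon}(x)} I \to I(x).
\]
So both ends of the chain converge to $-I(x)$. Consequently the limsup and liminf of $\ld{\mu_n(B_\epsilon(x))}$ both converge to $-I(x)$ as $\epsilon\downarrow0$, which means $I(x)=\bar I(x)$. The same argument applied to $I'$ gives $I'(x)=\bar I(x)$. Since $x$ was arbitrary, $I=I'$.

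The only delicate step is passing from the open ball to the closed ball, since the upper bound of the LDP applies only to closed sets. This is handled by the inclusion $\overline B_\epsilon\subset B_{2\epsilon}$ together with lower semicontinuity. Goodness of the rate function is not needed. The value $I(x)=\infty$ also needs no separate treatment, because then both ends of the chain tend to $-\infty$.
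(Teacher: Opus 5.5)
Your proof is correct. The paper does not prove this statement: it is quoted in the appendix as a standard fact from large deviation theory, so there is no argument of the paper's to compare against.

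What you wrote is the usual metric-space proof, and each step holds. The sandwich
\[
-I(x)\le \liminf_{n}\tfrac1n\log\mu_n(B_\epsilon(x))\le\limsup_{n}\tfrac1n\log\mu_n(B_\epsilon(x))\le -\inf_{\overline B_\epsilon(x)} I
\]
combines with $\overline B_\epsilon(x)\subset B_{2\epsilon}(x)$ and the ball form of lower semicontinuity. Together these identify $I(x)$ with $-\lim_{\epsilon\downarrow 0}\limsup_n \tfrac1n\log\mu_n(B_\epsilon(x))$, which is determined by $(\mu_n)$ alone. This works in the extended reals as well, including the cases $I(x)=\infty$ and $\mu_n(B_\epsilon(x))=0$. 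You are also right that goodness of $I$ is never used.

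For comparison, the textbook proof (e.g.\ Dembo--Zeitouni, Lemma~4.1.4) argues by contradiction in an arbitrary regular topological space:
\begin{itemize}
\item Suppose $I_1(x_0)>I_2(x_0)$, and choose a level $c$ with $I_2(x_0)<c<I_1(x_0)$.
\item By lower semicontinuity of $I_1$, the set $\{I_1>c\}$ is an open neighbourhood of $x_0$.
\item By regularity, there is a closed neighbourhood $C$ of $x_0$ contained in $\{I_1>c\}$.
\item The $I_2$-lower bound on the interior of $C$ gives at least $-I_2(x_0)$.
\item The $I_1$-upper bound on $C$ gives at most $-c$, which is a contradiction.
\end{itemize}
That route needs no metric. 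Yours gives something stronger in the metric setting: an explicit local formula for the rate function in terms of ball probabilities.
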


We now present a powerful result which establishes the existence of an LDP for random sequences with moderate dependence. Let $(\B{X}_n)_{n\in \mathbb{N}}$ be a sequence of $\R^d$--random variables with moment generating function $\varphi_n(\B t)=\mathbb{E}[{e^{\langle \B t, \B X_n\rangle}}]$, $\B t\in \R^d$, where $\langle \cdot, \cdot\rangle$ denotes the standard inner product. Suppose $ \Lambda(\B t) = \lim_{n\to \infty} \frac 1n \log \varphi_n(n\B t)$, $\B t\in \R^d$ exists as an extended real number and $\Lambda^*(\cdot)$ be its Legendre transform:
\[
   \Lambda^*(\B x) = \sup_{\B t\in \R^d} \{\langle \B x, \B t\rangle - \Lambda(\B t)\}\in [-\infty, \infty], \ \B{x} \in \R^d.
\]
\begin{theorem} [\textbf{G{\"a}rtner-Ellis}] \label{th: GE}
    Let $(\B{X}_n)_{n\in \mathbb{N}}$ be a sequence of random variables on $\mathbb{R}^d$ with law $(\mu_n)_{n\in \mathbb{N}}$. Suppose $\Lambda(\cdot)$ is differentiable in $\R^d$. Then the sequence $(\mu_n)_{n\in \mathbb{N}}$ satisfies the LDP with rate function $\Lambda^*(\cdot)$ and rate $n$.
\end{theorem}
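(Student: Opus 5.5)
The plan is the classical two-part argument: an upper bound from exponential Chebyshev inequalities plus exponential tightness, and a lower bound from an exponential change of measure at exposed points. Since $\Lambda$ is assumed differentiable on all of $\R^d$, it is finite everywhere, so the essential-smoothness condition of the general theorem holds trivially. First I record the basic properties. $\Lambda$ is convex, being a pointwise limit of the convex functions $\frac1n\log\varphi_n(n\,\cdot)$ by H\"older's inequality, and $\Lambda(\B 0)=0$. Hence $\Lambda^*$ is convex, lower semicontinuous and nonnegative. Finiteness of $\Lambda$ near the origin gives $\Lambda^*(\B x)\ge \rho|\B x|-C$ for some $\rho>0$ and constant $C$, so the level sets of $\Lambda^*$ are compact and $\Lambda^*$ is a good rate function.

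\emph{Upper bound.} Let $K$ be compact and $\B x\in K$. For each $\delta>0$ I choose $\B t$ with $\langle\B x,\B t\rangle-\Lambda(\B t)\ge \min\{\Lambda^*(\B x)-\delta,1/\delta\}$. Markov's inequality then gives
\[
\mu_n(B_r(\B x))\le \e^{-n\langle \B t,\B x\rangle+nr|\B t|}\varphi_n(n\B t).
\]
Taking $r$ small, covering $K$ by finitely many such balls, and letting $\delta\to0$ yields the bound $-\inf_K\Lambda^*$. To pass from compact to closed sets I need exponential tightness. This follows by applying the same Chebyshev bound with $\B t=\pm\lambda\B e_i$ for each coordinate, using $\Lambda(\pm\lambda\B e_i)<\infty$. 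It gives $\limsup\frac1n\log\mu_n(\{|\B x|_\infty>M\})\to-\infty$ as $M\to\infty$.

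\emph{Lower bound.} It suffices to show that for every open $O$ and every $\B y\in O$ with $\Lambda^*(\B y)<\infty$ we have $\liminf\frac1n\log\mu_n(O)\ge-\Lambda^*(\B y)$. First suppose $\B y$ is exposed, that is, $\B y=\nabla\Lambda(\B\eta)$ for some $\B \eta$. Then $\Lambda^*(\B y)=\langle\B\eta,\B y\rangle-\Lambda(\B\eta)$, and $\B\eta$ is an exposing hyperplane. I introduce the tilted laws $d\tilde\mu_n=\e^{n\langle\B\eta,\B x\rangle}d\mu_n/\varphi_n(n\B\eta)$. Their limiting log-mgf is $\tilde\Lambda(\B t)=\Lambda(\B t+\B\eta)-\Lambda(\B\eta)$. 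Applying the already-proved upper bound to $\tilde\mu_n$ on $B_\delta(\B y)^c$ shows $\tilde\mu_n(B_\delta(\B y))\to1$; the strict positivity of the infimum there uses that $\B y$ is exposed by $\B \eta$. Undoing the tilt on $B_\delta(\B y)\subset O$ then gives
\[
\tfrac1n\log\mu_n(O)\ge \Lambda(\B\eta)-\langle\B\eta,\B y\rangle-\delta|\B\eta|+o(1),
\]
and letting $\delta\to0$ gives the bound.

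\emph{Main obstacle.} The main obstacle is extending the lower bound from exposed points to every $\B y$ in the effective domain of $\Lambda^*$. Here I would use Rockafellar's lemma: for a convex, everywhere differentiable, finite $\Lambda$, the relative interior of the domain of $\Lambda^*$ is contained in the range of $\nabla\Lambda$. By convexity and lower semicontinuity of $\Lambda^*$, any $\B y$ in the domain is a limit of points $\B y_k$ of this relative interior with $\Lambda^*(\B y_k)\to\Lambda^*(\B y)$; for instance, take $\B y_k$ on the segment from an interior point to $\B y$. Each $\B y_k$ is then exposed, so $\inf_{O\cap\mathcal F}\Lambda^*=\inf_O\Lambda^*$, where $\mathcal F$ is the set of exposed points, and this completes the lower bound. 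If proving Rockafellar's lemma in full becomes heavy, I would instead cite it as a standard convex-analysis result, as in Dembo--Zeitouni, Lemma~2.3.12.
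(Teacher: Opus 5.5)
Your proposal is correct. It is the standard textbook proof of exactly the case the paper needs: the paper gives no proof of its own and quotes the G\"artner--Ellis theorem in the appendix as a black box for the conditional edge LDP. The argument you give (Chebyshev bounds with exponential tightness, tilting at the points $\nabla\Lambda(\B\eta)$, then Rockafellar's lemma to reach all of the domain of $\Lambda^*$) is the Dembo--Zeitouni proof.

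The steps you state without proof are routine:
\begin{itemize}
\item $\B\eta$ exposes $\B y=\nabla\Lambda(\B\eta)$. For $\B z\neq\B y$, put $\B t=\B\eta+\epsilon(\B z-\B y)$ into the definition of $\Lambda^*(\B z)$ and expand $\Lambda$ to first order at $\B\eta$. This gives $\langle\B\eta,\B z\rangle-\Lambda^*(\B z)\le\Lambda(\B\eta)-\epsilon|\B z-\B y|^2+o(\epsilon)$, which is strictly less than $\Lambda(\B\eta)$ for small $\epsilon$.
\item $\inf_{B_\delta(\B y)^c}\tilde\Lambda^*>0$. This follows from the exposedness above together with goodness and lower semicontinuity of $\tilde\Lambda^*$.
\item In the approximation step, start the segment at a relative-interior point of the domain of $\Lambda^*$, not an interior point, since that domain may have empty interior.
\end{itemize}
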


\subsection{Connectivity constraints for multitype edge matrices}\label{sec: trees}
Throughout this appendix, $\B n=(n_r)_{r\in\mc S}$ is fixed with $n_r>0$ and $n=\sum_r n_r$. We call an edge matrix $\B\varepsilon$ \emph{feasible} if $C(\B n,\B\varepsilon)>0$. Clearly, we have the capacity constraints, that is $\B\varepsilon\preceq\B J$, where $\B J$ is defined in~\eqref{eq:edges_max}. Moreover, feasibility imposes genuinely multitype connectivity constraints. A useful way to separate these constraints from the extra edges is to reduce to the existence of spanning trees. Indeed $C(\B n,\B\varepsilon)>0$ if and only if $\B 0\preceq\B\varepsilon\preceq\B J$ and there exists an edge matrix $\B\tau\preceq\B\varepsilon$ that is realized by a spanning tree on $V_{\B n}$.

Thus the main question is whether there exists or not a spanning tree with prescribed type-pair counts. A characterization can be expressed through directed trees. Fix a root type $\rho\in\mc S$ and orient every edge of a spanning tree toward a root vertex of type $\rho$. Let $m^{(\rho)}_{rs}$ denote the number of directed edges whose tail has type $r$ and whose head has type $s$. Then
\begin{equation}\label{eq:directed-tree-conditions-revised}
    \tau_{rs}=\frac{m^{(\rho)}_{rs}+m^{(\rho)}_{sr}}{2^{\delta_{rs}}}.
\end{equation}
If $\mc L(\B m^{(\rho)})=\operatorname{diag}(\B m^{(\rho)}\B 1)-\B m^{(\rho)}$ is the directed Laplacian, then necessary conditions for the existence of such $\B m ^{(\rho)}$ are
\begin{equation}\label{eq:directed-cofactor-revised}
    \sum_{s\in\mc S} m^{(\rho)}_{rs}= n_r-\mathds 1_{\{r=\rho\}}, \qquad \det\big(\mc L_{-\rho}(\B m^{(\rho)})\big)\ne 0,
\end{equation}
 where $\mc L_{-\rho}$ is obtained from $\mc L$ by deleting the row and column indexed by $\rho$. The directed matrix-tree condition -- the second equation in ~\eqref{eq:directed-cofactor-revised} -- is equivalent to the weighted digraph on $[d]$ vertices encoded by $\B m^{(\rho)}$ containing a spanning tree rooted at $\rho$. The multitype tree enumeration framework in~\cite{BeMo14} establishes these conditions are also sufficient. In this form, the characterization says that a tree edge matrix $\B\tau$ is feasible if and only if, for some root type $\rho$, there is a nonnegative integer matrix $\B m^{(\rho)}$ satisfying~\eqref{eq:directed-tree-conditions-revised} and~\eqref{eq:directed-cofactor-revised}. 

Several simple necessary conditions follow immediately. Equations ~\eqref{eq:directed-cofactor-revised} imply, respectively, 
\begin{equation} \label{eq: tree-necessary-revised-1}
    \sum_{r\leq s}\tau_{rs}=n-1 \quad \text{ and } \quad \B \tau \text{ irreducible}.
\end{equation}
Moreover, if $\B\tau$ is realized by a spanning tree, then
\begin{equation} \label{eq:tree-necessary-revised-2}
    \sum_{\substack{r\leq s\\ r,s\in A}}\tau_{rs}\leq \sum_{r\in A} n_r-1
\quad\text{for every nonempty }A\subsetneq\mc S.
\end{equation}
Indeed, the edges with both endpoints in the set of types $A$ form a forest on $\sum_{r\in A}n_r$ vertices. In particular, taking $A=\{r\}$ gives $\tau_{rr}\leq n_r-1$.
In general, for a connected graph with excess $k=\sum_{r\le s}\varepsilon_{rs}-(n-1)>0$, the corresponding necessary bounds are \begin{equation}\label{eq:connected-necessary-revised}\sum_{\substack{r\leq s\\ r,s\in A}}\varepsilon_{rs}\leq \sum_{r\in A} n_r-1+k,\qquad \emptyset\ne A\subsetneq \mc S.\end{equation}
These subset inequalities are stronger than the one-type inequalities $\varepsilon_{rr}\leq n_r-1+k$ and are often the first obstruction in dimensions $d\geq3$.

\paragraph{The two-type case.}
When $d=2$, the preceding conditions reduce to a clean criterion. Let $\B n=(n_1,n_2)$ and let $k=\varepsilon_{11}+\varepsilon_{12}+\varepsilon_{22}-(n-1)$.
Then $C(\B n,\B\varepsilon)>0$ if and only if $k\geq0$, the capacity constraints hold, and
\begin{equation}\label{eq:two-type-feasibility-revised}
\varepsilon_{12}\geq 1,
\qquad
\varepsilon_{ii}\leq n_i-1+k\quad (i=1,2).
\end{equation}
Necessity follows from irreducibility and~\eqref{eq:connected-necessary-revised}. For sufficiency, choose integers $\tau_{ii}\leq \min\{\varepsilon_{ii},n_i-1\}$ so that
\[
\tau_{12}:=n-1-\tau_{11}-\tau_{22}
\]
satisfies $1\leq \tau_{12}\leq\varepsilon_{12}$; the inequalities in~\eqref{eq:two-type-feasibility-revised} guarantee that this can be done. Build forests on the two type classes with respectively $\tau_{11}$ and $\tau_{22}$ internal edges. These forests have $n_1-\tau_{11}$ and $n_2-\tau_{22}$ components. Since
\[
\tau_{12}=(n_1-\tau_{11})+(n_2-\tau_{22})-1,
\]
one may connect these components by a bipartite tree using exactly $\tau_{12}$ cross-type edges. This gives a spanning tree with edge matrix $\B\tau\preceq\B\varepsilon$, yielding a connected graph with edge matrix $\B\varepsilon$.

\paragraph{Relation with the hypothesis of Theorem~\ref{thm:main}.}
The decomposition assumption in Theorem~\ref{thm:main} has a natural interpretation. Given a connected graph, choose a spanning tree and orient it toward a root; orient all remaining edges arbitrarily. If $w^{(n)}_{rs}$ is the number of oriented edges from type $r$ to type $s$, then
\[
2^{\delta_{rs}}\varepsilon_{rs}=w^{(n)}_{rs}+w^{(n)}_{sr},
\qquad
\B w^{(n)}\B 1\succeq \B n-\B 1_{\rho},
\]
where $\B 1_{\rho}$ is the coordinate vector of the root type. Then, rescaling the matrices $\B \eps$ and $\B w^{(n)}$ with $n$, we see that any limiting connected edge matrix $\B a$ therefore admits a decomposition with $\B\nu\preceq \B w\B 1$. With the idea to have a non-negligible excess of edges (equivalent to the easy assumption $k\approx xn$ for $x>0$ in the one-type case), we impose 
the strict condition $\B\nu\prec\B w\B 1$ used in Theorem~\ref{thm:main}.
The irreducibility of $\B w$ rules out a decomposition supported on disconnected type classes and prevents the fixed-point solution in~\eqref{eq: y_rs_equation} from approaching the boundary.

\section*{Acknowledgements}
LA would like to acknowledge fruitful discussions with B. Chin, M. Dickson, S. Donderwinkel, R. van der Hofstad and N. Malhotra
on connected random graphs initiated during \emph{RandNET Summer School and Workshop
on Random Graphs} in Eindhoven in August 2022, and with G. Bet and M. Phalempin during her stay at the University of Florence.
LA acknowledges the partial support by the
INDAM GNAMPA Project, codice CUP E53C25002010001 and by the Italian Ministry of University and Research
(MUR) via PRIN 2022 – ConStRAINeD-CUP-2022XRWY7W.  

MV acknowledges financial support from the DFG through the IRTG 2544 “Stochastic Analysis in Interaction.

\bibliographystyle{abbrv}
\bibliography{article}  
\end{document}